\newcommandx{\unsure}[2][1=]{\todo[linecolor=red,backgroundcolor=red!25,bordercolor=red,#1]{#2}}
\newcommandx{\change}[2][1=]{\todo[linecolor=blue,backgroundcolor=blue!25,bordercolor=blue,#1]{#2}}
\newcommandx{\info}[2][1=]{\todo[linecolor=OliveGreen,backgroundcolor=OliveGreen!25,bordercolor=OliveGreen,#1]{#2}}
\newcommandx{\improvement}[2][1=]{\todo[linecolor=Plum,backgroundcolor=red!25,bordercolor=red,#1]{#2}}
\newcommandx{\thiswillnotshow}[2][1=]{\todo[disable,#1]{#2}}
\begin{document}

\newtheorem{prop}{Proposition}[section]
\newtheorem{theorem}{Theorem}[section]
\newtheorem{lemma}{Lemma}[section]
\newtheorem{cor}{Corollary}[section]
\newtheorem{remark}{Remark}[section]
\theoremstyle{definition}
\newtheorem{defn}{Definition}[section]
\newtheorem{ex}{Example}[section]

\numberwithin{equation}{section}

\title{Intermittency generated by attracting and weakly repelling fixed points}

\author[Zeegers]{Benthen Zeegers}

%
%
%
%
%
\address{B.P. Zeegers\\ Mathematical Institute, University of Leiden, PO Box 9512, 2300 RA Leiden, The Netherlands}
\email{b.p.zeegers@math.leidenuniv.nl}

\date{Version of \today}

\begin{abstract} Recently for a class of critically intermittent random systems a phase transition was found for the finiteness of the absolutely continuous invariant measure. The systems for which this result holds are characterized by the interplay between a superexponentially attracting fixed point and an exponentially repelling fixed point. In this article we consider a closely related family of random systems with instead exponentially fast attraction to and polynomially fast repulsion from two fixed points, and show that such a phase transition still exists. The method of the proof however is different and relies on the construction of a suitable invariant set for the transfer operator.
\end{abstract}
\subjclass[2020]{Primary: 37A05, 37E05, 37H05}
\keywords{Intermittency, random dynamics, invariant measures}

\maketitle

\section{Introduction}\label{sec1}

Intermittent dynamical systems are systems that fluctuate between spending long periods in a chaotic state and long periods in a seemingly steady state. Well-known examples of one-dimensional intermittent dynamical systems are the LSV maps from \cite{LSV} given by
\begin{align}\label{eq1.1}
S_{\alpha}: [0,1] \to [0,1], \quad S_{\alpha}(x) =  \begin{cases}
x(1+2^{\alpha} x^{\alpha}) & \text{if}\quad x \in [0,\frac{1}{2}],\\
2x-1 & \text{if}\quad x \in (\frac{1}{2},1],
\end{cases}
\end{align}
where $\alpha > 0$. These maps were introduced as a simplification of the Manneville-Pomeau maps on $[0,1]$ given by $x \mapsto x+x^{1+\alpha} \bmod 1$ with $\alpha > 0$ which were considered to study intermittency in the context of transition to turbulence in convective fluids, see \cite{ManPum,ManPum2,BPV}. For the LSV maps and Manneville-Pomeau maps the periods of chaotic behaviour are caused by the uniform expansion of the maps away from zero whereas the neutral fixed point at zero makes orbits spend a long time close to zero.

\medskip 
In the recent papers \cite{AbbGhaHom,HP,Zeegers21,Zeegers22} \emph{critically} intermittent dynamical systems are studied. These are systems that exhibit intermittency coming from the interplay between a superattracting fixed point and a repelling fixed point. More specifically, in \cite{Zeegers21,Zeegers22} random dynamical systems on $[0,1]$ are analysed that generate i.i.d.~random compositions of so-called good bad and bad maps. The bad maps share a superstable fixed point $c \in (0,1)$ with $(0,1)$ as basin of attraction and the good maps send $c$ into $\{0,1\}$, which is a repelling invariant set for both the good and bad maps. The random orbits then converge superexponentially fast to the point $c$ under iterations of the bad maps, and once a good map is applied then diverge exponentially fast from $\{0,1\}$. This is illustrated in Figure \ref{fig1}(a) with the logistic maps $T_2(x) = 2x(1-x)$ and $T_4(x)=4x(1-x)$. It was shown in \cite{Zeegers21,Zeegers22} that when varying the probabilities of chosing the good and bad maps these random systems exhibit a phase transition where the unique absolutely continuous invariant measure changes from finite to infinite. 

\begin{figure}[h] \label{fig1}
\centering
\subfigure[]
{
\begin{tikzpicture}[scale =4]
\draw(-.01,0)--(1.01,0)(0,-.01)--(0,1.01);
\draw[dotted](.5,0)--(.5,1)(0,1)--(.5,1)(.5,.5)--(0,.5);
\draw[dotted](0,0)--(1,1);
\draw[line width=.4mm, green!50!blue!70!black, smooth, samples =20, domain=0:1] plot(\x, { 4* \x * (1-\x)});
\draw[line width=.4mm, red!40!blue!70!black, smooth, samples =20, domain=0:1] plot(\x, { 2* \x * (1-\x)});
\draw[red, dashed, line width=.25mm](.25,0)--(.255,.375)--(.375,.375)--(.37995,.4688)--(.4688,.4688)--(.4688,.9961)--(.9961,.9961)--(.9961,.0155)--(.0155,.0155)--(.0155,.0262)--(.0262,.0262)--(0.0262,.051)--(0.051,.051)--(0.051,.1975)--(0.1975,.1975);

\node[below] at (.25,0){\small $x$};
\node[below] at (-.01,0){\tiny 0};
\node[below] at (1,0){\tiny 1};
\node[below] at (.5,0){\tiny $\frac12$};
\node[left] at (0,1){\tiny 1};
\node[left] at (0,.5){\tiny $\frac12$};
\end{tikzpicture}}
\hspace{1cm}
\subfigure[]
{
\begin{tikzpicture}[scale =4]
\draw(-.01,0)--(1.01,0)(0,-.01)--(0,1.01);
\draw[dotted](.5,0)--(.5,1)(0,1)--(1,1);
\draw[line width=.4mm, green!50!blue!70!black](.5,0)--(1,1);
\draw[line width=.4mm, red!40!blue!70!black, smooth, samples =20, domain=0.5:1] plot(\x, { 0.5+0.3*(\x-0.5)+2*(1-0.3)*(\x-0.5)^2});
\node[below] at (1,0){\tiny 1};
\node[below] at (.5,0){\tiny $\frac12$};
\node[below] at (-.01,0){\tiny 0};
\node[left] at (0,1){\tiny 1};
\draw[dotted](.5,0)--(.5,.5)(.5,.5)--(0,.5);
\node[left] at (0,.5){\tiny $\frac12$};
\draw[line width=.4mm, green!50!blue!70!black, smooth, samples =20, domain=0:0.5] plot(\x, { \x*(1+2^(0.5)*\x^(0.5)});
\draw[line width=.4mm, red!40!blue!70!black, smooth, samples =20, domain=0:0.5] plot(\x, { \x*(1+2^2*\x^2});
\draw[dotted](0,0)--(1,1);
\node[below] at (.73766,0){\small $x$};
\draw[red, dashed, line width=.25mm](.73766,0)--(.73766,.65037)--(.65037,.65037)--(.65037,.57677)--(.57677,.57677)--(.57677,.53128)--(.53128,.53128)--(.53128,.510752)--(.510752,.510752)--(.510752,.021504)--(.021504,.025964)--(0.025964,.025964)--(0.025964,.0318806)--(0.0318806,.0318806)--(0.0318806,.0399308)--(0.0399308,.0399308)--(.0399308,.051215)--(.051215,.051215)--(.051215,.067606)--(.067606,.067606)--(.067606,.092466)--(.092466,.092466)--(0.092466,.13223)--(0.13223,.13223)--(0.13223,.20023)--(0.20023,.20023)--(0.20023,.213089)--(.213089,.213089)--(.213089,.352198)--(.352198,.352198);
\end{tikzpicture}}
\caption{Intermittency in the random system of (a) the logistic maps $T_2,T_4$ (b) the LSV map $S_{\alpha}$ with $\alpha=0.5$ and the map $R_{\beta,K}$ from \eqref{eq1.2} with $\beta=2$ and $K=0.4$. The dashed lines indicates part of a random orbit of $x$.}
\label{fig:interm}
\end{figure}
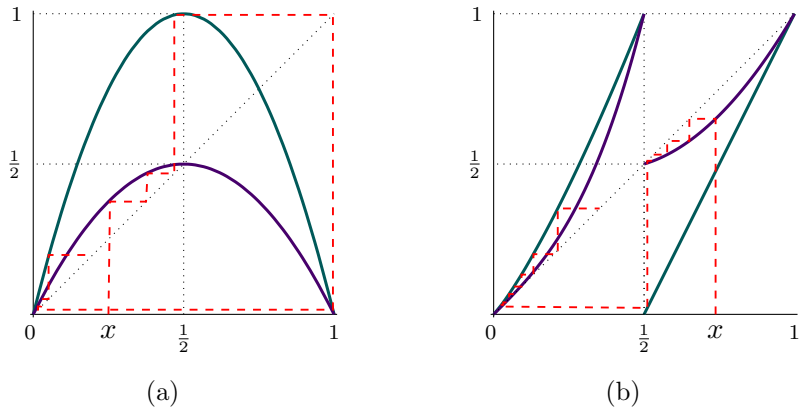

\medskip
In \cite{Zeegers21} the question was asked what happens to the absolutely continuous invariant measure, if it exists, when the superexponential convergence to $c$ is replaced by exponential convergence to $c$ and the exponential divergence from $0$ and $1$ is replaced by polynomial divergence from $0$ and $1$. In this article we investigate this by considering a random system that generates i.i.d.~random compositions of a finite fixed number of maps of two types: Type 1 consists of the LSV maps from \eqref{eq1.1} and type 2 consists of LSV maps where the right branch is replaced by increasing branches that map $(\frac{1}{2},1]$ to itself and for which the derivative close to $\frac{1}{2}$ is smaller than 1. The random orbits then converge exponentially fast to $\frac{1}{2}$ under applications of maps of type 2, and as soon as a map of type 1 is applied then diverge polynomially fast from $0$, see Figure \ref{fig1}(b). We will show that such random systems exhibit a phase transition similar to the one found in \cite{Zeegers21,Zeegers22} in the sense that it depends on the features of the maps as well as on the probabilities of choosing the maps whether the system admits a finite absolutely continuous invariant measure or not.

\medskip
The LSV maps have been studied extensively over the past two decades as being the standard one-dimensional example of an intermittent dynamical system. It is well-known that an LSV map $S_{\alpha}$ has a unique absolutely continuous invariant measure that is finite if $\alpha \in (0,1)$ and infinite but $\sigma$-finite if $\alpha \geq 1$, see e.g.~\cite{pianigiani1980,LSV,PS91}. In \cite{BBD14,BB16,zeegers18,NTF18,BBR19,BQT21,NPT21} random systems are studied that generate i.i.d.~random compositions of LSV maps $S_{\alpha}$ where $\alpha$ is sampled from some fixed subset $A \subseteq (0, \infty)$. It is proven in \cite{BBD14} by means of a Young tower that in case $A$ is finite and a subset of $(0,1]$ an absolutely continuous invariant probability measure exists if the minimal value $\alpha_{\min}$ of $A$ lies in $(0,1)$. This was later shown in \cite{zeegers18} as well without the restriction $A \subseteq (0,1]$ as long as $A$ is finite, $\alpha_{\min}$ lies in $(0,1)$ and $\alpha_{\min}$ has strictly positive probability to be sampled. Here the approach of \cite{LSV} is followed by constructing a suitable invariant set for the transfer operator, see Section \ref{sec4}. Recently it has been shown in \cite{BQT21} using renewal theory of operators that the finiteness condition on $A$ can be dropped as well to show the existence of an absolutely continuous invariant probability measure.

\medskip
We define the class $\mathfrak{S} = \{S_{\alpha}: \alpha \in (0,\infty)\}$ where $S_{\alpha}$ is the LSV map from \eqref{eq1.1}, and the class $\mathfrak{R} = \{R_{\alpha,K}: \alpha \in (0,\infty), K \in (0,1) \}$ where
\begin{align}\label{eq1.2}
R_{\alpha,K}(x) =  \begin{cases}
x(1+2^{\alpha} x^{\alpha}) & \text{if}\quad x \in [0,\frac{1}{2}],\\
\frac{1}{2}+K(x-\frac{1}{2})+2(1-K)(x-\frac{1}{2})^2 & \text{if}\quad x \in (\frac{1}{2},1].
\end{cases}
\end{align}
See Figure \ref{fig1}(b). The right branch of $R_{\alpha,K}$ is defined in such a way that $\frac{1}{2}$ and 1 are fixed points for $R_{\alpha,K}$ and that under $R_{\alpha,K}$ orbits eventually approach $\frac{1}{2}$ from above. The rate of this convergence to $\frac{1}{2}$ is determined by $K$. Let $T_1,\ldots,T_N \in \mathfrak{S} \cup \mathfrak{R}$ be a finite collection. We write 
\begin{align*}
&\Sigma_S = \{1 \leq j \leq N: T_j \in \mathfrak{S}\},\\
&\Sigma_R = \{1 \leq j \leq N: T_j \in \mathfrak{R}\},\\
&\Sigma = \{1,\ldots,N\} = \Sigma_S \cup \Sigma_R.
\end{align*}
We assume that $\Sigma_S,\Sigma_R \neq \emptyset$. For each $j \in \Sigma$ we write $\alpha_j \in (0,\infty)$ if $T_j(x) = x(1+2^{\alpha_j} x^{\alpha_j})$ for $x \in [0,\frac{1}{2}]$. For $j \in \Sigma_R$ we moreover write $K_j \in (0,1)$ if $T_j(x) = \frac{1}{2} + K_j(x-\frac{1}{2}) + 2(1-K_j)(x-\frac{1}{2})^2$ for $x \in (\frac{1}{2},1]$.

\medskip
We define the skew product $F$ by
\begin{equation}\label{q:skewproduct}
 F:\Sigma^{\mathbb N} \times [0,1] \to \Sigma^{\mathbb N} \times [0,1], \, (\omega,x) \mapsto (\sigma \omega, T_{\omega_1}(x)),
 \end{equation}
where $\sigma$ denotes the left shift on sequences in $\Sigma^{\mathbb N}$. Let $\mathbf p = (p_j)_{j \in \Sigma}$ be a probability vector with strictly positive entries representing the probabilities with which we choose the maps $T_j$ ($j \in \Sigma$). We write $\mathbb{P}$ for the $\mathbf p$-Bernoulli measure on $\Sigma^\mathbb N$. By drawing $\omega$ from $\Sigma^{\mathbb{N}}$ according to $\mathbb{P}$ iterations under $F$ produce in the second coordinate random orbits in $[0,1]$. Since each of the maps $T_j$ ($j \in \Sigma$) has zero as a neutral fixed point, these random orbits exhibit intermittent behaviour in the sense that periods of chaotic behaviour are followed by periods of spending time near zero. The periods near zero get longer and more frequent for larger values of $p_j$ ($j \in \Sigma_R$), smaller values of $K_j$ ($j \in \Sigma_R$) and larger values of $\alpha_j$ ($j \in \Sigma$). See Figure \ref{fig1}(b).

\medskip
We will consider measures of the form $\mathbb P \times \mu_{\mathbf p}$, where $\mathbb P$ is the $\mathbf p$-Bernoulli measure on $\Sigma^\mathbb N$ and $\mu_{\mathbf p}$ is a Borel measure on $[0,1]$ absolutely continuous with respect to the Lebesgue measure $\lambda$ on $[0,1]$ and satisfying
\begin{align*}
\sum_{j \in \Sigma} p_j \mu_{\mathbf p}(T_j^{-1}A) = \mu_{\mathbf p}(A), \qquad \text{for all Borel sets $A \subseteq [0,1]$}.
\end{align*}
In this case $\mathbb P \times \mu_{\mathbf p}$ is an invariant measure for $F$ and we say that $\mu_{\mathbf p}$ is a {\em stationary} measure for $F$. If $\mu_{\mathbf p}$ is furthermore absolutely continuous with respect to $\lambda$, then we call $\mu_{\mathbf p}$ an absolutely continuous stationary (acs) measure for $F$.

\medskip
We set $\alpha_{\min} = \min\{\alpha_j: j \in \Sigma\}$ and throughout the article we assume $\alpha_{\min} < 1$. Furthermore, we set
\begin{align*}
& \eta = \sum_{r \in \Sigma_R} p_r K_r^{ -\alpha_{\min}},\\
& \gamma = \sup\{\delta \geq 0:  \sum_{r \in \Sigma_R} p_r K_r^{-\delta} < 1 \}.
\end{align*}
Note that if $\eta < 1$, then $\gamma > \alpha_{\min}$. We have the following main results.

\begin{theorem}\label{thrm1.1a}
Suppose $\eta > 1$. Then $F$ admits no acs probability measure.
\end{theorem}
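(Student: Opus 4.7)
The plan is to argue by contradiction. Assume $\mu := \mu_{\mathbf p}$ is an acs stationary probability measure, and set $m := \mathbb P \times \mu$ and $B := \Sigma^{\mathbb N} \times (\tfrac12, 1]$. The goal is to show $\int_B \tau_B \, dm = \infty$, where $\tau_B(\omega,x) = \inf\{n\geq 1 : F^n(\omega,x) \in B\}$, which contradicts Kac's lemma: since every orbit with $x_0 > 0$ eventually enters $B$ (the iterates on $[0,\tfrac12]$ are strictly increasing and cannot converge to a limit below $\tfrac12$, because the only common fixed point of the left branches is $0$) and $m(\Sigma^{\mathbb N} \times \{0\}) = 0$, Kac yields $\int_B \tau_B \, dm = m\bigl(\bigcup_{n\geq 0} F^{-n}B\bigr) = 1$.

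The heart of the argument is a subharmonic inequality for $\nu := \mu|_{(\tfrac12, 1]}$. Because each $R_r$ ($r \in \Sigma_R$) maps $(\tfrac12, 1]$ into itself, restricting the stationarity identity $\mu(A) = \sum_{j \in \Sigma} p_j \mu(T_j^{-1} A)$ to Borel $A \subset (\tfrac12, 1]$ and dropping the non-negative contributions from $j \in \Sigma_S$ gives
\[
\nu(A) \;\geq\; \sum_{r \in \Sigma_R} p_r \, \nu(R_r^{-1} A), \qquad \text{equivalently} \qquad \int \psi \, d\nu \;\geq\; \sum_{r \in \Sigma_R} p_r \int \psi \circ R_r \, d\nu
\]
for any $\psi \geq 0$. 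I would test this against $\psi(x) = (x-\tfrac12)^{-\alpha_{\min}} \mathbf{1}_{(\tfrac12, \tfrac12 + u^*]}(x)$ for a small $u^* > 0$. Since $R_r(\tfrac12+u) - \tfrac12 = K_r u + 2(1-K_r)u^2$, a direct calculation gives
\[
\frac{\sum_{r \in \Sigma_R} p_r \, \psi(R_r(\tfrac12+u))}{\psi(\tfrac12+u)} \;=\; \sum_{r \in \Sigma_R} p_r K_r^{-\alpha_{\min}} \bigl(1 + 2(1-K_r)u/K_r\bigr)^{-\alpha_{\min}} \;\longrightarrow\; \eta \;>\; 1
\]
as $u \to 0^+$. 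Choosing $u^*$ small enough that $R_r$ maps $(\tfrac12, \tfrac12 + u^*]$ into itself and the ratio stays $\geq \eta_0 > 1$, the subharmonic inequality forces $\int \psi\, d\nu \geq \eta_0 \int \psi\, d\nu$, hence $\int \psi\, d\nu \in \{0, \infty\}$. A short auxiliary argument rules out $0$: if $\nu$ vanished on some $(\tfrac12, \tfrac12 + \epsilon_0]$, iterating stationarity through the $R_r$ (whose inverses attract $\epsilon_0$ toward $\tfrac12$, the attracting fixed point of $R_r^{-1}$ on $(\tfrac12,1]$) would force $\mu|_{(\tfrac12, 1)} = 0$, and a parallel backward iteration via the left branches of the $T_j$ would then collapse the support of $\mu$ onto $\{0\}$, contradicting $\mu \ll \lambda$. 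Therefore $\int_{(\tfrac12, \tfrac12 + u^*]} (x-\tfrac12)^{-\alpha_{\min}} d\mu(x) = \infty$.

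To convert this integrability failure into an infinite return-time integral, I use a deterministic escape-time bound: for $(\omega, x) \in B$ with $\omega_1 \in \Sigma_S$ and $x = \tfrac12+u$, one has $F(\omega,x) = (\sigma\omega, 2u) \in \Sigma^{\mathbb N} \times (0, \tfrac12]$, and since $(2y)^\alpha$ is decreasing in $\alpha$ for $0 < 2y < 1$, the random iteration $y_{n+1} = y_n(1 + (2y_n)^{\alpha_{\omega_{n+1}}})$ is pointwise dominated by the deterministic iteration of $S_{\alpha_{\min}}$; classical LSV estimates then give $y_n < \tfrac12$ for all $n < C(2u)^{-\alpha_{\min}}$, whence $\tau_B(\omega,x) \geq C' u^{-\alpha_{\min}}$. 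Integrating over $\{\omega_1 \in \Sigma_S\} \cap B$,
\[
\int_B \tau_B \, dm \;\geq\; C' \Bigl(\sum_{s \in \Sigma_S} p_s\Bigr) \int_{(\tfrac12, \tfrac12 + u^*]} (x-\tfrac12)^{-\alpha_{\min}} d\mu(x) \;=\; \infty,
\]
contradicting the Kac bound.

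The main obstacle is the subharmonic step: establishing the pointwise domination $\sum_{r \in \Sigma_R} p_r \psi \circ R_r \geq \eta_0 \psi$ uniformly on a genuine right-neighborhood of $\tfrac12$ (rather than only asymptotically as $u \to 0$) and, separately, verifying that $\nu$ charges every such neighborhood. Once these two ingredients are in place, the escape-time estimate and the Kac closure are routine.
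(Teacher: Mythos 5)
Your proof is correct, but it takes a genuinely different route from the paper's, and I think it is worth spelling out the contrast.

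Both arguments run through Kac's formula and both use the same two quantitative ingredients: the LSV escape-rate estimate $x_n \asymp n^{-1/\alpha}$ (the paper's \eqref{eq5a}) and the pointwise domination of the random left-branch iteration by the single map $S_{\alpha_{\min}}$ (the paper's \eqref{eq3.6g}, from Lemma 4.4 of \cite{BBD14}). Where you differ is how the divergence of $\int_Y \varphi_Y\,dm$ is engineered. The paper constructs a rather specific sweep-out set $Y = \bigcup_j [j]\times(A_j\cup B_j)$ and directly lower-bounds the return time along each cylinder of the form $[u\,t^k\,\mathbf{r}\,s]$, summing over $\mathbf r\in\Sigma_R^n$; the divergence appears as a geometric series $\sum_n\gamma^n$ with $\gamma = \sum_r p_rL_r^{-\alpha_{\min}} > 1$. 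You instead take the simplest possible sweep-out set $B=\Sigma^{\mathbb N}\times(\frac12,1]$ and isolate the heavy tail of the measure: by dropping the non-negative terms from $\Sigma_S$ and from the left branches in the stationarity identity you obtain $\nu(A)\ge\sum_r p_r\,\nu(R_r^{-1}A)$ on $(\frac12,1]$, test it against $\psi(x)=(x-\frac12)^{-\alpha_{\min}}\mathbf 1_{(\frac12,\frac12+u^*]}$, and force $\int\psi\,d\nu\in\{0,\infty\}$; positivity is indeed ruled out by iterating stationarity through the right branches of the $R_r$ (whose inverses send $\frac12+u^*$ to $1$) and then through the left branches. From the heavy tail $\int_{(\frac12,\frac12+u^*]}(x-\tfrac12)^{-\alpha_{\min}}d\mu=\infty$ and the escape-time bound $\tau_B\ge C'\,(x-\tfrac12)^{-\alpha_{\min}}$ on $\{\omega_1\in\Sigma_S\}\times(\frac12,\frac12+u^*]$, the contradiction $\int_B\tau_B\,dm=\infty$ is immediate.

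A few small things to be careful about in a full write-up: (i) to get the uniform pointwise domination $\sum_r p_r\psi(R_r(x))\ge\eta_0\psi(x)$ you only need $u^*<\frac12$ (so that $R_r$ maps $(\frac12,\frac12+u^*]$ into itself) plus $u^*$ small enough that $\sum_r p_rK_r^{-\alpha_{\min}}\bigl(1+2(1-K_r)u^*/K_r\bigr)^{-\alpha_{\min}}>\eta_0>1$, which exists by continuity since the expression tends to $\eta$ as $u^*\to0$; (ii) the invocation of Kac's formula in the non-ergodic case can either be done through the ergodic decomposition as the paper does, or via the general conservative Kac formula — either way one gets $\int_B\tau_B\,dm\le m(\Sigma^{\mathbb N}\times[0,1])=1$, which is all you need; (iii) you need $m(B)>0$ before invoking Kac, but this follows from the same positivity argument you already give. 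Overall, your route makes the mechanism of the obstruction — the density must blow up at least as fast as $(x-\frac12)^{-\alpha_{\min}}$ as $x\downarrow\frac12$, and this is incompatible with a finite expected escape time — visible more directly, at the modest cost of having to separately establish the positivity and pointwise-domination steps that the paper encodes implicitly in its combinatorial sum.
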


\begin{theorem}\label{thrm1.1b}
Suppose $\eta < 1$.
\begin{enumerate}[(1)]
\item There exists a unique acs probability measure $\mu_{\mathbf p}$ for $F$. Moreover, $F$ is ergodic with respect to $\mathbb{P} \times \mu_{\mathbf p}$.
\item The density $\frac{d\mu_{\mathbf p}}{d\lambda}$ is bounded away from zero and on the intervals $(0,\frac{1}{2}]$ and $(\frac{1}{2},1]$ is decreasing and locally Lipschitz. Furthermore, for each $\beta \in (\alpha_{\min},\gamma) \cap (0,1]$ there exist $a_1, a_2 > 0$ such that
\begin{align}
&\frac{d\mu_{\mathbf p}}{d\lambda}(x) \leq a_1 \cdot x^{-\alpha_{\min}-1+\beta}, & x \in \Big(0,\frac{1}{2}\Big], \label{eq1.5b}\\
& \frac{d\mu_{\mathbf p}}{d\lambda}(x) \leq a_2 \cdot \Big(x-\frac{1}{2}\Big)^{-1+\beta}, & x \in \Big(\frac{1}{2},1\Big]. \label{eq1.6b}
\end{align} 
\end{enumerate}
\end{theorem}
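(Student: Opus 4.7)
My plan is to adapt the transfer operator / invariant cone method of \cite{LSV,zeegers18} to the mixed class $\mathfrak{S} \cup \mathfrak{R}$. Let
\begin{equation*}
Pf(x) = \sum_{j \in \Sigma} p_j \sum_{y \in T_j^{-1}\{x\}} \frac{f(y)}{|T_j'(y)|}
\end{equation*}
be the Perron--Frobenius operator of $F$ with respect to Lebesgue measure; finding an acs probability measure reduces to finding a non-negative $h \in L^1(\lambda)$ with $Ph = h$. Since $\eta < 1$ forces $\gamma > \alpha_{\min}$, I fix once and for all $\beta \in (\alpha_{\min}, \gamma) \cap (0, 1]$.

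I would work with a convex set $\mathcal{C}$ of functions $f : [0,1] \to [0, \infty)$ that (i) are bounded below by a positive constant, (ii) are decreasing and locally Lipschitz on each of $(0, \tfrac12]$ and $(\tfrac12, 1]$, with quantitative local Lipschitz bounds, and (iii) satisfy $f(x) \leq a_1 x^{-\alpha_{\min} - 1 + \beta}$ on $(0, \tfrac12]$ and $f(x) \leq a_2 (x - \tfrac12)^{-1+\beta}$ on $(\tfrac12, 1]$, for constants $a_1, a_2 > 0$ to be fixed later. Normalizing by $\int f \, d\lambda = 1$ makes $\mathcal{C}$ non-empty, convex and, via Helly's selection theorem together with the pointwise dominating bounds (which are integrable since $\beta > \alpha_{\min}$ and $\beta > 0$), relatively $L^1$-compact.

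The heart of the argument is verifying $P(\mathcal{C}) \subseteq \mathcal{C}$ for a suitable choice of $a_1, a_2$. Preservation of monotonicity and local Lipschitz bounds follows branchwise as in \cite{LSV,zeegers18}. The pointwise bound near $0$ is handled by the standard LSV-type estimate: for small $x$ the left-branch preimages of $T_j$ satisfy $y_j = x - 2^{\alpha_j} x^{1 + \alpha_j} + \cdots$ with Jacobian $\to 1$, so the gain from the factor $x^\beta$ beats the neutrality at $0$; right-branch preimages of $x \in (0, \tfrac12]$ (only LSV maps contribute here, as right branches of $R$-maps stay in $(\tfrac12, 1]$) lie at $\Theta(1)$ in $(\tfrac12, 1]$ and contribute a constant absorbed by $a_1$. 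The genuinely new estimate is near $\tfrac12^+$: for $r \in \Sigma_R$ and $x \in (\tfrac12, 1]$, the preimage $y_r$ of $x$ under the right branch of $R_{\alpha_r, K_r}$ satisfies $y_r - \tfrac12 = K_r^{-1}(x - \tfrac12) + O((x - \tfrac12)^2)$ and $T_r'(y_r) = K_r + O(x - \tfrac12)$, so its contribution to $Pf(x)$ is at most $a_2 K_r^{-\beta}(x - \tfrac12)^{-1+\beta}(1 + O(x - \tfrac12))$; summing over $r$ with weights $p_r$ produces the prefactor $\sum_{r \in \Sigma_R} p_r K_r^{-\beta}$, which is strictly less than one precisely because $\beta < \gamma$. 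The remaining contributions, from left-branch preimages located at distance $\Theta(1)$ from both $0$ and $\tfrac12$ (where $f$ is bounded by $a_1$ times a constant) and from LSV right-branch preimages at $(x+1)/2 \in (\tfrac34, 1]$, are bounded by constants that can be absorbed by enlarging $a_2$.

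Once $P(\mathcal{C}) \subseteq \mathcal{C}$ is established, Schauder--Tychonoff applied to the continuous operator $P$ on the compact convex set $\mathcal{C}$ yields a fixed density $h \in \mathcal{C}$; the measure $\mu_{\mathbf p} = h \, d\lambda$ then satisfies the qualitative regularity in part (2) and the inequalities \eqref{eq1.5b}--\eqref{eq1.6b} by construction. For part (1), since $h$ is bounded below by a positive constant we have $\mu_{\mathbf p} \sim \lambda$, and uniqueness and ergodicity follow by a standard topological argument: finite words in $\Sigma$ can push any non-empty open subset of $[0,1]$ onto an interval covering $(\tfrac12, 1]$, so no non-trivial $F$-invariant set of positive measure can exist. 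The main obstacle I anticipate is the joint calibration of $a_1$ and $a_2$: the estimate for $Pf$ near $0$ feeds in values of $f$ on $(\tfrac12, 1]$ (involving $a_2$), and the estimate near $\tfrac12^+$ feeds in values on $(0, \tfrac12]$ (involving $a_1$), so both constants must be chosen simultaneously, using both slacks $\beta < 1$ and $\beta < \gamma$. Controlling the quadratic error in the $R_{\alpha_r, K_r}$ right-branch preimage expansion so that it does not destroy the linear contraction rate $K_r$ is the most delicate piece of bookkeeping.
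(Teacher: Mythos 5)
Your overall strategy---stabilize a convex cone of monotone densities with explicit polynomial upper bounds under the Perron--Frobenius operator, use compactness, extract a fixed point, and conclude uniqueness/ergodicity from full support---is precisely the strategy of the paper's proof (Section 3.2, following \cite{LSV,zeegers18}). However, three steps in your proposed cone argument are not correct as stated.

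First, the lower bound condition (i) is not $P$-invariant and should be removed from $\mathcal C$. If $f\geq c$ on $(0,1]$, then the best one can say from the formula for $Pf$ on $(0,\tfrac12]$ is $Pf(x)\geq c\big(\sum_{j}p_j(\alpha_j+2)^{-1}+\tfrac{p_S}{2}\big)$, and the prefactor is strictly less than $1$ (it is $\leq \tfrac12+\tfrac{p_S}{2}<1$), so the lower bound degrades under iteration. The paper does not put a lower bound into $\mathcal C_2$; positivity of the fixed density is extracted afterwards from its membership in $\mathcal C_2$ (it is normalized and decreasing on each half) together with the fixed-point equation.

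Second, ``locally Lipschitz with quantitative bounds'' is not the right structural condition to impose on the cone, because Lipschitz seminorms are not manifestly contracted by $P$. What \cite{LSV,zeegers18} and the present paper actually require is that $x\mapsto x^{d}f(x)$ be increasing on $(0,\tfrac12]$ and $x\mapsto(x-\tfrac12)^{d}f(x)$ be increasing on $(\tfrac12,1]$, with $d=\alpha_{\max}+2$. This is the set $\mathcal C_1$ of Lemma~\ref{lemma3.3}; its $P$-invariance depends on Lemma~\ref{lemma3.2} (monotonicity of $\frac{(1+x)^{d}}{1+(\alpha+1)x}$) and on the new Lemma~\ref{lemma3.2b}(i) (monotonicity of $H_{K,d}$), the latter being exactly the place where the $R$-branches enter. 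The local Lipschitz property in part (2) of the theorem is then a \emph{consequence} of membership in $\mathcal C_1$ combined with the upper bound, as in \eqref{eq3.25c}--\eqref{eq3.26c}; it is not an input.

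Third, your estimate for the $R$-branches near $\tfrac12^+$ is local: you expand $z_r-\tfrac12=K_r^{-1}(x-\tfrac12)+O((x-\tfrac12)^2)$ and carry an error $O(x-\tfrac12)$ that is not small away from $\tfrac12$, and you propose to ``absorb'' the remainder into $a_2$. But the dangerous term is itself proportional to $a_2$, so enlarging $a_2$ does not help: you need the contraction factor $\sum_r p_r K_r^{-\beta}$ to hold globally on $(\tfrac12,1]$. This is exactly what Lemma~\ref{lemma3.2b}(ii) supplies: the function
\[
H_{K_r,t_2}(z_r)=\frac{\big(K_r+2(1-K_r)(z_r-\tfrac12)\big)^{t_2}}{K_r+4(1-K_r)(z_r-\tfrac12)},\qquad t_2=1-\beta,
\]
is decreasing on $[\tfrac12,1]$ (since $t_2\leq1$), hence is $\leq H_{K_r,t_2}(\tfrac12)=K_r^{-\beta}$ uniformly. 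With this, \eqref{eq8} follows cleanly and the joint calibration of $a_1,a_2$ is sequential (first $a_2$ from \eqref{eq8}, then $a_1$ from \eqref{eq9}--\eqref{eq12}), so there is no circularity. The choice of compactness tool (Helly on monotone functions plus domination, vs.\ the paper's Arzel\`a--Ascoli applied to $x^{d}f(x)$) and of fixed-point device (Schauder--Tychonoff vs.\ Ces\`aro averaging) are interchangeable and do not affect correctness.
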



The previous theorem shows that the random system undergoes a phase transition with threshold $\eta = 1$. The system admits a finite acs measure if $\eta < 1$ and if an acs measure exists in the case that $\eta > 1$ then this measure must be infinite. Note that if $\sum_{r \in \Sigma_R} p_r K_r^{-1} < 1$,  then $\gamma > 1$. So in this case we can take $\beta = 1$, and then the previous theorem says that there exists $a > 0$ such that
\begin{align}
\frac{d\mu_{\mathbf p}}{d\lambda}(x) \leq a \cdot x^{-\alpha_{\min}}, \qquad  x \in (0,1]. \label{eq1.7b}
\end{align}
This bound is also found in \cite{LSV} where only one LSV map $T_1 \in \mathfrak{S}$ with $\alpha_1 \in (0,1)$ is considered and no maps in $\mathfrak{R}$. This suggest that in case $\sum_{r \in \Sigma_R} p_r K_r^{-1} < 1$ the attraction by the maps  $\{T_j\}_{j \in \Sigma_R}$ to $\frac{1}{2}$ does not change the order of the pole of the invariant density at zero. Note however that the density in the setting of \cite{LSV} is shown to be continuous on $(0,1)$, which in general is not the case for the density in the setting of Theorem \ref{thrm1.1b}. See Figure \ref{fig2}.

\begin{figure}[h] 
\centering
\subfigure[]
{
\includegraphics[width=0.47\textwidth]{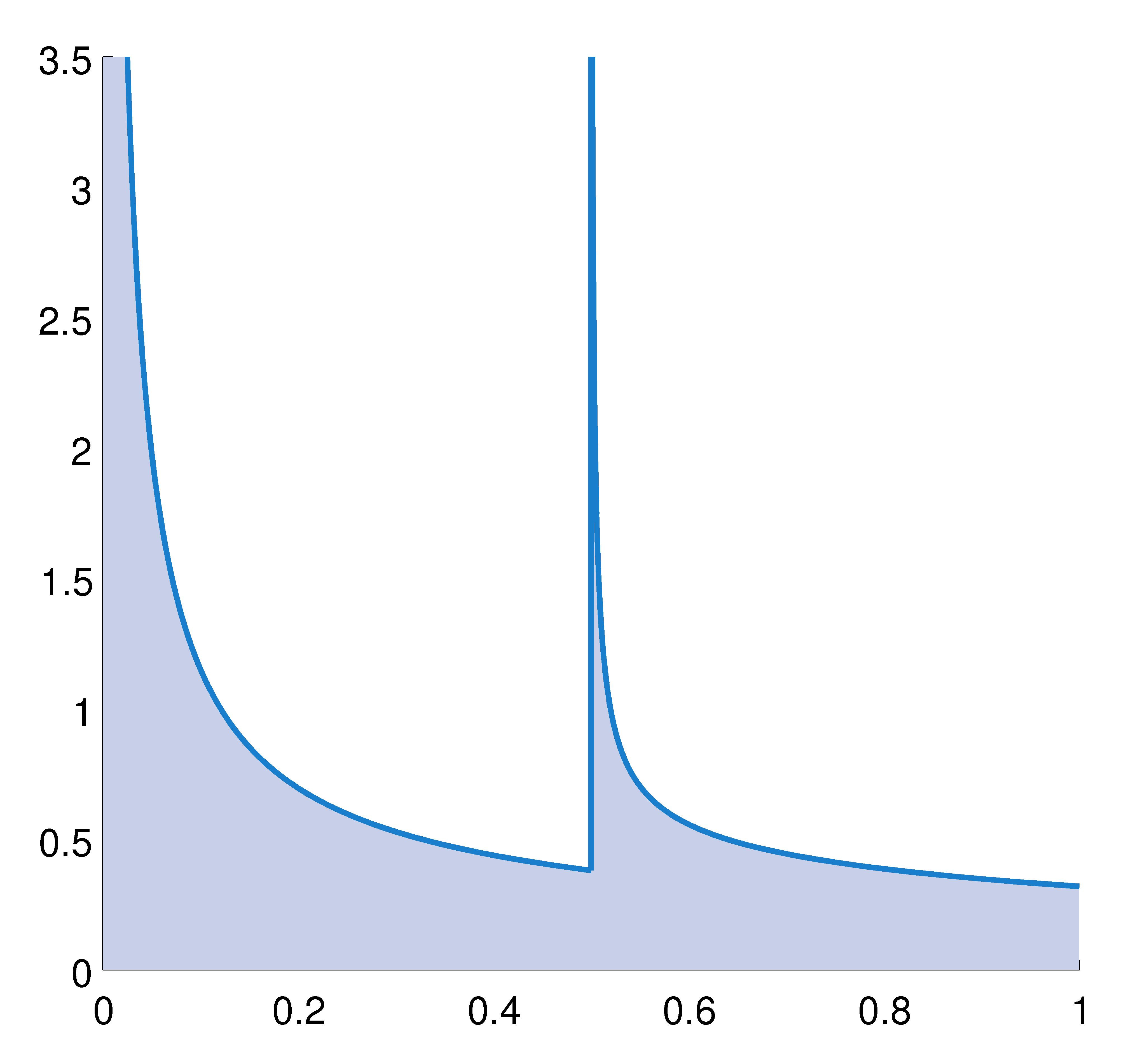}
}
\subfigure[]
{
\includegraphics[width=0.47\textwidth]{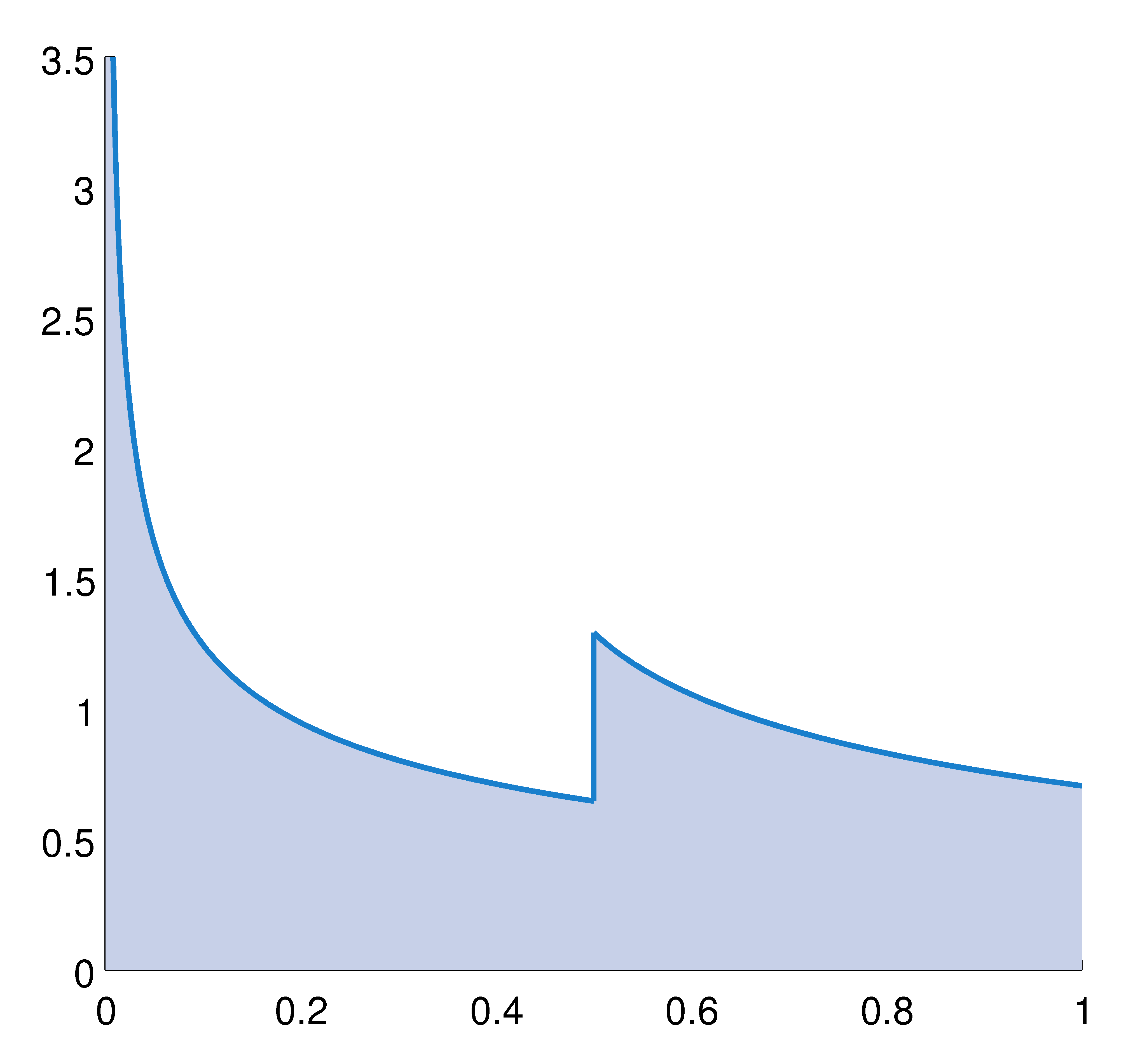}
}
\caption{Simulation of $\frac{d\mu_{\mathbf p}}{d\lambda}$ in case  $\Sigma_S=\{1\}$, $\Sigma_R = \{2\}$, $p_1 = 0.6$ and $\alpha_1 = \alpha_2 = \frac{1}{2}$ for two different values of $K_2$. Both pictures depict $P^{100}(1)$ with $P$ as in \eqref{eq5}, where in (a) we have taken $K_2 = 0.2$ (so $\eta < 1 < p_2K_2^{-1}$) and in (b) $K_2 = 0.8$ (so $\eta < p_2K_2^{-1} < 1$).}
\label{fig2}
\end{figure}

\medskip
With Theorem \ref{thrm1.1b} we can derive the following result, which says that the density $\frac{d\mu_{\mathbf p}}{d\lambda}$ depends continuously w.r.t.~the $L^1(\lambda)$-norm on the probability vector $\mathbf p \in \mathbb{R}^N$.

\begin{cor}\label{cor1.1}
For each $n \in \mathbb{N}$, let $\mathbf p_n = (p_{n,j})_{j \in \Sigma}$ be a positive probability vector such that $\sup_n \sum_{r \in \Sigma_R} p_{n,r} K_r^{-\alpha_{\min}} < 1$ and assume that $\lim_{n \rightarrow \infty} \mathbf p_n = \mathbf p$ in $\mathbb{R}_+^N$. Then $\frac{d\mu_{\mathbf p_n}}{d\lambda}$ converges with respect to the $L^1(\lambda)$-norm to $\frac{d\mu_{\mathbf p}}{d\lambda}$.
\end{cor}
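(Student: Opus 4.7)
The plan is a standard compactness-and-uniqueness argument, driven by the uniform pointwise bounds in Theorem \ref{thrm1.1b}(2) and the continuous dependence of the random transfer operator on $\mathbf p$.

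First, the hypothesis $\sup_n \sum_{r \in \Sigma_R} p_{n,r} K_r^{-\alpha_{\min}} < 1$ combined with $\mathbf p_n \to \mathbf p$ gives $\eta < 1$ also for the limit $\mathbf p$, so Theorem \ref{thrm1.1b} applies to $\mathbf p$ and, eventually, to each $\mathbf p_n$. Moreover, by continuity one can choose a single $\beta \in (\alpha_{\min},1]$ with $\sum_{r \in \Sigma_R} p_{n,r} K_r^{-\beta} < 1$ uniformly in $n$, so that $\beta$ is simultaneously admissible (in the sense of Theorem \ref{thrm1.1b}(2)) for all $\mathbf p_n$ and for $\mathbf p$. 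The key preliminary step, which I regard as the main technical obstacle, is to upgrade the constants $a_1, a_2$ in \eqref{eq1.5b}--\eqref{eq1.6b} to be independent of $n$. This should be read off from the construction of the invariant cone for the transfer operator in Section \ref{sec4}: that invariant set is defined by inequalities whose constants depend only on $\alpha_{\min}$, $\beta$, the finitely many parameters of the maps $T_j$, and on a lower bound for $1 - \sum_{r \in \Sigma_R} p_r K_r^{-\beta}$, all of which are uniformly controlled along $(\mathbf p_n)$. The output is a single integrable envelope
\begin{equation*}
 g(x) = a_1\, x^{-\alpha_{\min}-1+\beta}\, \chi_{(0,\frac{1}{2}]}(x) + a_2\, \bigl(x-\tfrac{1}{2}\bigr)^{-1+\beta}\, \chi_{(\frac{1}{2},1]}(x)
\end{equation*}
dominating every $\frac{d\mu_{\mathbf p_n}}{d\lambda}$ and $\frac{d\mu_{\mathbf p}}{d\lambda}$.

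With this envelope in hand, the monotonicity of each density on $(0,\tfrac{1}{2}]$ and on $(\tfrac{1}{2},1]$ from Theorem \ref{thrm1.1b}(2), combined with Helly's selection theorem applied separately on each piece, produces a subsequence $\frac{d\mu_{\mathbf p_{n_k}}}{d\lambda}$ converging Lebesgue-a.e.~on $[0,1]$ to a nonnegative function $h$. Dominated convergence with the envelope $g$ then upgrades this to convergence in $L^1(\lambda)$, and in particular $\int h\, d\lambda = 1$. To identify the limit, write $P_{\mathbf q} = \sum_{j \in \Sigma} q_j P_{T_j}$; since each $P_{T_j}$ is an $L^1$-contraction, $\|P_{\mathbf q} - P_{\mathbf q'}\|_{L^1 \to L^1} \leq \sum_{j \in \Sigma} |q_j - q'_j|$. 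Passing to the limit in $P_{\mathbf p_{n_k}} \frac{d\mu_{\mathbf p_{n_k}}}{d\lambda} = \frac{d\mu_{\mathbf p_{n_k}}}{d\lambda}$ yields $P_{\mathbf p} h = h$, so $h$ is a stationary density for $\mathbf p$, and by the uniqueness in Theorem \ref{thrm1.1b}(1) we conclude $h = \frac{d\mu_{\mathbf p}}{d\lambda}$. Since every subsequence of $(\mathbf p_n)$ has a further subsequence along which the densities converge to the same limit in $L^1(\lambda)$, the full sequence converges.

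The only substantial work is the uniformity of $a_1, a_2$; everything else is soft. If tracking those constants through the proof of Theorem \ref{thrm1.1b} turns out to be cumbersome, a cleaner route is to verify directly that a single cone of densities defined by the monotonicity assertions together with \eqref{eq1.5b}--\eqref{eq1.6b} (for fixed $a_1, a_2, \beta$) is invariant under every $P_{\mathbf p_n}$ simultaneously for large $n$, which then supplies the common envelope $g$ with no further bookkeeping.
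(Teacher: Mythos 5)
Your proposal is correct and follows the same overall skeleton as the paper's proof: extract a subsequence, use uniformity of the constants in the invariant cone to get a common envelope, invoke compactness, identify the limit via the $L^1$-continuity of $\mathbf q \mapsto P_{F,\mathbf q}$, and close by uniqueness and the subsequence-of-subsequences principle. The one point where you diverge is the compactness step: you propose Helly's selection theorem on each monotone piece plus dominated convergence with the envelope $g$, whereas the paper simply invokes its Lemma~\ref{lemma3.5u} (compactness of $\mathcal{C}_2$ in $L^1$, proved via Arzel\`a--Ascoli applied to $x\mapsto x^{d}f(x)$ and $x\mapsto (x-\tfrac12)^{d}f(x)$). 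Both routes work; note, though, that the densities blow up at $0$ and $\tfrac12^{+}$, so Helly's theorem must be applied on an exhausting sequence of compacta with a diagonal extraction, which is precisely the bookkeeping Lemma~\ref{lemma3.5u} already packages. You also correctly identify the uniformity of $a_1,a_2,\beta$ as the sole non-soft ingredient, and the paper resolves it exactly as you anticipate: by inspecting the proof of Lemma~\ref{lemma3.4h} and noting that every constant depends only on a uniform lower bound for $1-\sum_{r}p_{n,r}K_r^{-\beta}$, so one $\mathcal{C}_2(a_1,a_2,\beta)$ contains all the $\frac{d\mu_{\mathbf p_n}}{d\lambda}$ for $n$ large. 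Your alternative suggestion at the end---verifying directly that one cone is simultaneously $P_{\mathbf p_n}$-invariant---is in fact what the paper does.
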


\medskip
The remainder of this article is organised as follows. In Section 2 we introduce some notation and list some general preliminaries. Section \ref{sec3} concentrates on proving Theorems \ref{thrm1.1a} and \ref{thrm1.1b} and Corollary \ref{cor1.1}. First of all, Theorem \ref{thrm1.1a}, which states that $F$ admits no acs probability measure if $\eta > 1$, is proved using Kac's Lemma. Then for the case that $\eta < 1$ we show the existence of an acs probability measure by considering a suitable set of functions that is invariant with respect to the Perron-Frobenius operator of the random system. We will then apply the Arzelà-Ascoli Theorem to prove that this set has a fixed point. This approach is similar to the one in Section 2 of \cite{LSV} where only one LSV map is considered. Section \ref{sec3} ends with the proof of Corollary \ref{cor1.1} and the article will be concluded in Section \ref{sec4} with some final remarks.

\section{Preliminaries}\label{sec:mr}

In this section we introduce some notation and state some general preliminaries.

\medskip
For any finite subset $\Sigma \subseteq \mathbb N$ and any integer $n \ge 1$ we use $\mathbf u \in \Sigma^n$ to denote a {\em word} $\mathbf u = u_1 \cdots u_n$. $\Sigma^0$ contains only the empty word, which we denote by $\epsilon$. On the space of infinite sequences $\Sigma^\mathbb N$ we use
\[ [\mathbf u] = [u_1 \cdots u_n] = \{\omega \in \Sigma^{\mathbb{N}}: \omega_1 = u_1, \ldots, \omega_n = u_n\}\]
to denote the {\em cylinder set} corresponding to $\mathbf u$.  For two words $\mathbf u \in \Sigma^n$ and $\mathbf v \in \Sigma^m$ the concatenation of $\mathbf u$ and $\mathbf v$ is denoted by $\mathbf{uv} \in \Sigma^{n+m}$.

\medskip
Let $\{ T_j: [0,1] \to [0,1]\}_{j \in \Sigma}$ be a finite family of Borel measurable maps, and let $F$ be the skew product on $\Sigma^{\mathbb{N}} \times [0,1]$ given by
\begin{align*}
F(\omega,x) = (\sigma \omega, T_{\omega_1}(x)).
\end{align*}
We use the following notation for compositions of $T_1,\ldots,T_N$. For each $\omega \in \Sigma^{\mathbb{N}}$ and each $n \in \mathbb{N}$ we write
\begin{align*}
T_\omega^n(x) = T_{\omega_n}\circ  T_{\omega_{n-1}}\circ \cdots \circ T_{\omega_1}(x).
\end{align*}
Using this, we can write iterates of $F$ as
\begin{align*}
F^n(\omega,x) = (\sigma^n \omega, T_{\omega}^n(x)).
\end{align*}

\medskip
We have the following lemma on invariant measures for $F$.

\begin{lemma}[\cite{Mor85}, see also Lemma 3.2 of \cite{Fro99}]\label{l:productmeasure}
If all maps $T_j$ are non-singular with respect to $\lambda$ (that is, $\lambda(A) =0$ if and only if $\lambda(T_j^{-1}A) = 0$ holds for all $A \subseteq [0,1]$ Borel measurable) and $\mathbb P$ is the $\mathbf p$-Bernoulli measure on $\Sigma^{\mathbb{N}}$ for some positive probability vector $\mathbf p$, then the $\mathbb{P} \times \lambda$-absolutely continuous $F$-invariant probability measures are precisely the measures of the form $\mathbb{P} \times \mu$ where $\mu$ is a $\lambda$-absolutely continuous probability measure that satisfies
\begin{align}\label{eqn9}
\sum_{j \in \Sigma} p_j \mu (T_j^{-1}A) = \mu (A) \qquad \text{for all Borel sets $A$}.
\end{align}
\end{lemma}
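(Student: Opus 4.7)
The lemma asserts an equivalence, so I would treat the two implications separately; both rely on the basic decomposition
\[F^{-1}([u_1 \cdots u_n] \times A) = \bigsqcup_{j \in \Sigma} [j u_1 \cdots u_n] \times T_j^{-1}(A),\]
valid for any finite word $u_1 \cdots u_n$ and any Borel $A \subseteq [0,1]$. For the easy direction, suppose $\mu$ is a $\lambda$-absolutely continuous probability measure satisfying \eqref{eqn9}. Then $\mathbb P \times \mu$ is $\mathbb P \times \lambda$-absolutely continuous, and to verify $F$-invariance it suffices, by a monotone class argument, to check it on the $\pi$-system of cylinder-times-Borel sets, where the displayed decomposition combined with \eqref{eqn9} yields
\[(\mathbb P \times \mu)(F^{-1}([u_1 \cdots u_n] \times A)) = \mathbb P([u_1 \cdots u_n]) \sum_{j \in \Sigma} p_j \mu(T_j^{-1} A) = \mathbb P([u_1 \cdots u_n]) \mu(A).\]

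For the converse, let $\nu$ be any $\mathbb P \times \lambda$-absolutely continuous $F$-invariant probability measure, and set $\mu := (\pi_2)_* \nu$ to be its marginal on $[0,1]$, which is $\lambda$-absolutely continuous. The plan then has three steps. First, I would show that the other marginal $(\pi_1)_* \nu$ equals $\mathbb P$: it is $\sigma$-invariant by $F$-invariance of $\nu$ and absolutely continuous with respect to $\mathbb P$ by absolute continuity of $\nu$; since $\mathbb P$ is $\sigma$-ergodic (being a Bernoulli measure), any such measure must coincide with $\mathbb P$. Second, I would establish the product structure $\nu = \mathbb P \times \mu$: writing $h := d\nu/d(\mathbb P \times \lambda) \in L^1(\mathbb P \times \lambda)$, the $F$-invariance of $\nu$ translates into a Perron--Frobenius fixed-point equation for $h$ which, combined with the independence of coordinates under $\mathbb P$, forces $h(\omega, x)$ to be independent of $\omega$ for $(\mathbb P \times \lambda)$-a.e.\ $(\omega, x)$; setting $\tilde h(x) := \int h(\omega, x)\, d\mathbb P(\omega)$, one then has $\nu = \mathbb P \times \mu$ with $d\mu/d\lambda = \tilde h$. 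Third, with the product structure in hand, applying $F$-invariance to sets of the form $\Sigma^{\mathbb N} \times A$ and invoking the displayed decomposition gives
\[\mu(A) = \nu(F^{-1}(\Sigma^{\mathbb N} \times A)) = \sum_{j \in \Sigma} (\mathbb P \times \mu)([j] \times T_j^{-1}(A)) = \sum_{j \in \Sigma} p_j \mu(T_j^{-1} A),\]
so $\mu$ satisfies \eqref{eqn9}.

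The main obstacle is the middle step, establishing the product structure. A priori the $\omega$-dependence of $h$ could be arbitrary; the task is to exploit the i.i.d.\ structure of $\mathbb P$ together with the one-sided nature of $F$ (only the first coordinate $\omega_1$ enters the fiber dynamics at each step) to force $h$ to be $\mathbb P$-essentially a function of $x$ alone. This can be viewed as tail-triviality of the Bernoulli measure $\mathbb P$ transported through the $F$-invariance relation, and it is precisely the place where non-singularity of each $T_j$ is needed, since it allows one to treat $h$ as a genuine density in the fixed-point equation.
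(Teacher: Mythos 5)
The paper does not prove Lemma \ref{l:productmeasure} itself; it is quoted from \cite{Mor85} (and Lemma 3.2 of \cite{Fro99}), so there is no proof in the paper to compare against line by line. Judged on its own terms, your proposal correctly identifies the overall structure and handles the easy direction cleanly: checking $F$-invariance of $\mathbb P\times\mu$ on the $\pi$-system of cylinder-times-Borel rectangles via the decomposition $F^{-1}([u_1\cdots u_n]\times A)=\bigsqcup_j [ju_1\cdots u_n]\times T_j^{-1}A$ is exactly right, and so is the observation that, once the product structure $\nu=\mathbb P\times\mu$ is in hand, applying invariance to $\Sigma^{\mathbb N}\times A$ yields \eqref{eqn9}. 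The identification of the base marginal as $\mathbb P$ via $\sigma$-ergodicity is also correct.

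The genuine gap is the middle step, which you yourself flag as ``the main obstacle'' but do not resolve. The assertion that the $F$-invariance equation for the density $h=d\nu/d(\mathbb P\times\lambda)$ ``combined with the independence of coordinates under $\mathbb P$ forces $h(\omega,x)$ to be independent of $\omega$'' is not a proof, and the appeal to tail-triviality does not immediately apply. If one writes out the fixed-point equation for the transfer operator on $L^1(\mathbb P\times\lambda)$, one gets
\begin{align*}
h(\omega,x) \;=\; \sum_{\mathbf v\in\Sigma^n} p_{\mathbf v}\,\bigl(\mathcal P_{T_{\mathbf v}}\,h(\mathbf v\omega,\cdot)\bigr)(x)
\end{align*}
for every $n$, where $\mathbf v\omega$ denotes the word $\mathbf v$ \emph{prepended} to $\omega$. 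This expresses $h(\omega,\cdot)$ in terms of $h(\mathbf v\omega,\cdot)$, i.e.\ in terms of the values of $h$ at sequences whose \emph{head} has been modified; it does not directly show that $h(\omega,\cdot)$ is measurable with respect to the tail field $\bigcap_n\sigma^{-n}\mathcal F$, which is what Kolmogorov's zero-one law would trivialize. Making the argument work requires an additional idea: for instance, showing that $\psi(\omega):=\|h(\omega,\cdot)-\bar h\|_{L^1(\lambda)}$, where $\bar h(x)=\int h(\omega,x)\,d\mathbb P(\omega)$, satisfies $\psi(\omega)\le\sum_{\mathbf v}p_{\mathbf v}\psi(\mathbf v\omega)$ with equality in the mean (hence $\mathbb P$-a.e.), and then extracting from the equality case in the $L^1$-contraction a rigidity that forces $\psi\equiv 0$; or passing to the natural extension and exploiting the two-sided Bernoulli structure, as in Morita's original argument. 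Without one of these steps, the converse direction is not established, and since the third step of your outline relies on the product structure, the hard implication remains open in your proposal.
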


A functional analytic approach can be used for finding measures $\mu$ that satisfy \eqref{eqn9} and are absolutely continuous w.r.t.~$\lambda$. Below we give a result for specific random interval maps on $[0,1]$. First of all, let $T: [0,1] \rightarrow [0,1]$ be piecewise strictly monotone and $C^1$. Then the {\em Perron-Frobenius operator} $\mathcal P_T: L^1(\lambda) \rightarrow L^1(\lambda)$ associated to $T$ is given by
\begin{equation}\label{q:pfd}
\mathcal P_T h (x) = \sum_{y \in T^{-1}\{x\}} \frac{h(y)}{|DT(y)|}.
\end{equation}
A non-negative function $\varphi \in L^1(\lambda)$ is a fixed point of $\mathcal P_T$ if and only if the measure $\mu$ given by $\mu(A) = \int_A \varphi \, d\lambda$ for each Borel set $A \subseteq [0,1]$ is an invariant measure for $T$. Now let $\{ T_j: [0,1] \to [0,1] \}_{j \in \Sigma}$ be a finite family of transformations such that each map $T_j$ is piecewise strictly monotone and $C^1$ and let $F$ be the corresponding skew product. Furthermore, let $\mathbf p = (p_j)_{j \in \Sigma}$ be a positive probability vector. Then the Perron-Frobenius operator $\mathcal{P}_{F,\mathbf p}: L^1(\lambda) \rightarrow L^1(\lambda)$ associated to $F$ and $\mathbf p$ is given by
\begin{align}\label{eqn3.22}
\mathcal{P}_{F,\mathbf p}h(x) = \sum_{j \in \Sigma} p_j \mathcal P_{T_j} h (x),
\end{align}
where each $\mathcal{P}_{T_j}$ is as given in \eqref{q:pfd}. A non-negative function $\varphi \in L^1(\lambda)$ is a fixed point of $\mathcal{P}_{F,\mathbf p}$ if and only if the measure $\mu$ given by $\mu(A) = \int_A \varphi d\lambda$ for each Borel set $A \subseteq [0,1]$ satisfies \eqref{eqn9}.

\medskip
Now let $(X,\mathcal{F},m)$ be a measure space and $T: X \rightarrow X$ measurable. For a set $Y \in \mathcal{F}$ the \emph{first return time map} $\varphi_Y: Y \rightarrow \mathbb{N} \cup \{\infty\}$ is defined as
\begin{align}\label{eq20}
\varphi_Y(y) = \inf\{n \geq 1 : T^n(y) \in Y\}.
\end{align}

\begin{lemma}[Kac's Lemma, see e.g.~1.5.5.~in \cite{Aar97}]\label{l:kac}
Let $T$ be an ergodic measure preserving transformation on $(X,\mathcal{F},m)$. Suppose that $m$ is finite. Let $Y \in \mathcal{F}$ be such that $m(Y) > 0$. Then $\int_Y \varphi_Y dm = m(X)$.
\end{lemma}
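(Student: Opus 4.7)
The plan is to decompose $X$ (modulo null sets) according to the first time at which iterates of $T$ enter $Y$. For $n \geq 0$, set
\[ A_n = \{ x \in X : T^n(x) \in Y \text{ and } T^j(x) \notin Y \text{ for } 0 \leq j < n \}, \]
so that $A_0 = Y$. First I would check that $X = \bigsqcup_{n \geq 0} A_n$ mod $m$: Poincaré recurrence, which is available because $m$ is finite and $T$-invariant, gives $\varphi_Y < \infty$ a.e.\ on $Y$, and ergodicity applied to the $T^{-1}$-invariant set $\bigcup_{n \geq 0} T^{-n}(Y)$ (which contains $Y$ and therefore has positive measure) upgrades this to full measure in $X$.

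The heart of the argument is the recursion
\[ m(A_n) = m(A_{n-1}) - m(Y_n), \qquad n \geq 1, \]
where $Y_n = \{ y \in Y : \varphi_Y(y) = n \}$. Unpacking the definition,
\[ T^{-1}(A_{n-1}) = \{ x \in X : T^n(x) \in Y,\ T^j(x) \notin Y \text{ for } 1 \leq j < n \}, \]
and splitting this according to whether $x \in Y$ or not yields the disjoint decomposition $T^{-1}(A_{n-1}) = Y_n \sqcup A_n$. Measure preservation then gives $m(A_{n-1}) = m(Y_n) + m(A_n)$. Iterating from $m(A_0) = m(Y) = \sum_{k \geq 1} m(Y_k)$ produces $m(A_n) = \sum_{k > n} m(Y_k)$.

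Summing over $n$ and interchanging the order of summation completes the proof:
\[ m(X) = \sum_{n \geq 0} m(A_n) = \sum_{n \geq 0} \sum_{k > n} m(Y_k) = \sum_{k \geq 1} k \cdot m(Y_k) = \int_Y \varphi_Y \, dm. \]

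I do not expect any deep obstacle; the main care is bookkeeping. One must verify the set-theoretic identity $T^{-1}(A_{n-1}) = Y_n \sqcup A_n$ carefully for possibly non-invertible $T$ (working throughout with pre-images makes measure preservation apply directly), and invoke ergodicity at exactly the right moment, namely to pass from Poincaré recurrence on $Y$ to the conclusion $\sum_{n \geq 0} m(A_n) = m(X)$ rather than merely $\leq m(X)$.
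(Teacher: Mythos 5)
The paper does not prove this lemma; it is stated as a citation to Aaronson's book (1.5.5), so there is no in-text proof to compare against. Your argument is the standard Kac-tower (first-entrance decomposition) proof and it is correct: the identity $T^{-1}(A_{n-1}) = Y_n \sqcup A_n$ is right, the recursion closes, and the Tonelli interchange is justified since everything is nonnegative. One small precision you should spell out: the set $E = \bigcup_{n\ge 0} T^{-n}(Y)$ is not literally $T^{-1}$-invariant, since $T^{-1}E = \bigcup_{n\ge 1}T^{-n}(Y) \subseteq E$ with the containment possibly strict; but measure preservation gives $m(E) = m(T^{-1}E)$, hence $m(E \triangle T^{-1}E)=0$, and ergodicity applies to sets that are invariant mod $m$. (Equivalently, Poincar\'e recurrence already shows $Y \subseteq T^{-1}E$ mod null, giving genuine invariance mod $m$.) With that noted, your proof is complete and matches the classical argument one finds in Aaronson.
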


\section{Phase transition for the acs measure}\label{sec3}

As in the Introduction, let $T_1,\ldots,T_N \in \mathfrak{S} \cup \mathfrak{R}$ be a finite collection, write $\Sigma_S = \{1 \leq j \leq N: T_j \in \mathfrak{S}\}$, $\Sigma_R = \{1 \leq j \leq N: T_j \in \mathfrak{R}\}$ and $\Sigma = \{1,\ldots,N\} = \Sigma_S \cup \Sigma_R$ and assume that $\Sigma_S,\Sigma_R \neq \emptyset$ and $\alpha_{\min} < 1$. Furthermore, we again denote by $F$ the skew product given by \eqref{q:skewproduct}, let $\mathbf p = (p_j)_{j \in \Sigma}$ be a probability vector with strictly positive entries and let $\mathbb P$ be the $\mathbf p$-Bernoulli measure on $\Sigma^\mathbb N$. Also, recall that
\begin{align*}
\eta = \sum_{r \in \Sigma_R} p_r K_r^{-\alpha_{\min}}.
\end{align*}

\subsection{The case $\eta > 1$}\label{case1}

In this subsection we prove Theorem \ref{thrm1.1a}, namely that any acs measure for $F$ must be infinite if $\eta > 1$. For this we will use the following well-known results.

\medskip
Let $j \in \Sigma$ and define the sequence $\{x_n(j)\}$ in $(0,\frac{1}{2}]$ by
\begin{align*}
x_1(j) = \frac{1}{2} \quad \text{ and } \quad x_n(j) = T_j|_{[0,\frac{1}{2}]}^{-1}\big(x_{n-1}(j)\big) \text{ for each integer $n \geq 2$}.
\end{align*}
As explained in e.g.~the beginning of Section 6.2 of \cite{Y99} there exists a constant $C_j > 1$ such that for each $n \in \mathbb{N}$
\begin{align}\label{eq5a}
C_j^{-1} n^{-\frac{1}{\alpha_j}} \leq x_n(j) \leq C_j n^{-\frac{1}{\alpha_j}}.
\end{align}
Furthermore, we define for each $\omega \in \Sigma^{\mathbb{N}}$ the random sequence $\{x_n(\omega)\}$ in $(0,\frac{1}{2}]$ by
\begin{align*}
x_1(\omega) = \frac{1}{2} \quad \text{ and } \quad x_n(\omega) = T_{\omega_1}|_{[0,\frac{1}{2}]}^{-1}(x_{n-1}(\sigma \omega)) \text{ for each integer $n \geq 2$}.
\end{align*}
Then, for each $\omega \in \Sigma^{\mathbb{N}}$ and $n \in \mathbb{N}$,
\begin{align}\label{eq3.5g}
T_{\omega}^{n-1}((x_{n+1}(\omega),x_n(\omega)]) = \Big(x_2(\sigma^{n-1}\omega),\frac{1}{2}\Big].
\end{align}
Letting $i \in \Sigma$ be such that $\alpha_i = \alpha_{\min}$, it has been shown in Lemma 4.4 of \cite{BBD14} that for each $\omega \in \Sigma^{\mathbb{N}}$ and $n \in \mathbb{N}$ we have
\begin{align}\label{eq3.6g}
x_n(i) \leq x_n(\omega).
\end{align}


\begin{proof}[Proof of Theorem \ref{thrm1.1a}]
Suppose that $\eta > 1$ and that $\mu$ is an acs probability measure for $F$. We will use Kac's Lemma to arrive at a contradiction. Define
\begin{align*}
& A_j = \Big(x_2(j),T_j|_{[0,\frac{1}{2}]}^{-1}\Big(\frac{3}{4}\Big)\Big), \qquad j \in \Sigma, \\
& B_j = \Big(\frac{3}{4},T_j|_{(\frac{1}{2},1]}^{-1}\Big(\frac{3}{4}\Big)\Big), \qquad j \in \Sigma, \\
& Y = \bigcup_{j \in \Sigma} [j] \times (A_j \cup B_j).
\end{align*}
We consider the first return time map $\varphi_Y$ to $Y$ under $F$ as defined in \eqref{eq20}. Since $\eta > 1$, there exists $\delta > 0$ small enough such that
\begin{align*}
\gamma :=\sum_{r \in \Sigma_R} p_r L_r^{-\alpha_{\min}} > 1, \qquad \text{ where $L_r := K_r+2(1-K_r) \cdot \delta$ for each $r \in \Sigma_R$.}
\end{align*}
For each $x \in (\frac{1}{2},\frac{1}{2}+\delta)$ we have
\begin{align}\label{eq3.7z}
T_r(x) = \frac{1}{2} + \Big(K_r + 2(1-K_r)\Big(x-\frac{1}{2}\Big)\Big)\Big(x-\frac{1}{2}\Big) \leq \frac{1}{2} + L_r \Big(x-\frac{1}{2}\Big).
\end{align}
For $\mathbf r = (r_1,\ldots,r_n) \in \Sigma_R^n$ we write $L_{\mathbf r} = \prod_{l=1}^n L_{r_l}$ with $L_{\mathbf r} = 1$ if $n=0$. Furthermore, fix $t \in \Sigma_R$. It is easy to see that $\lim_{n \rightarrow \infty} T_t^n(\frac{3}{4}) = \frac{1}{2}$, so there exists an integer $k \geq 0$ such that $T_t^k(\frac{3}{4}) \in (\frac{1}{2},\frac{1}{2} + \delta)$ holds.

\medskip
Let $(\omega,x) \in Y$ and $t$ and $k$ be as above. Furthermore, fix $s \in \Sigma_S$. Suppose that
$$\omega \in [u\underbrace{t\cdots t}_{k\ \text{times}} \mathbf r s] =  [ut^k \mathbf r s], \quad \text{for some $u \in \Sigma$, $\mathbf r \in \Sigma_R^n, n \geq 0$.}$$
We then have $T_{\omega}^l(x) \in (\frac{1}{2},\frac{3}{4})$ for all $1 \leq l \leq 1+k+n$. It follows from $T_{\omega_1}(x) \leq \frac{3}{4}$, $T_t^k(\frac{3}{4}) \in (\frac{1}{2},\frac{1}{2}+\delta)$ and \eqref{eq3.7z} that
\begin{align*}
T_{\omega}^{1+k+n}(x) \leq T_{\sigma \omega}^{k+n}\Big(\frac{3}{4}\Big) \leq \frac{1}{2}+L_{\mathbf r}\Big(T_t^k\Big(\frac{3}{4}\Big)-\frac{1}{2}\Big),
\end{align*}
which gives
\begin{align}\label{eq3.8c}
T_{\omega}^{2+k+n}(x) \leq L_{\mathbf r}\Big(2T_t^{k}\Big(\frac{3}{4}\Big)-1\Big)
\end{align}
Fix $i \in \Sigma$ such that $\alpha_i = \alpha_{\min}$. There exists an $m \in \mathbb{N}$ such that $T_{\omega}^{2+k + n}(x) \in (x_{m+1}(i),x_m(i)]$. It follows from \eqref{eq3.5g} and \eqref{eq3.6g} that
\begin{align}\label{eq3.9x}
\varphi_Y(\omega,x) \geq 2+ k + n+m.
\end{align}
We give a lower bound for $m$ in terms of $\mathbf r$. It follows from \eqref{eq5a} and \eqref{eq3.8c} that
\begin{align*}
C_i^{-1} (m+1)^{-\frac{1}{\alpha_i}} \leq L_{\mathbf r}\Big(2T_t^{k}\Big(\frac{3}{4}\Big)-1\Big).
\end{align*}
Solving for $m$ yields
\begin{align}\label{eq3.11x}
m \geq M_1\cdot  L_{\mathbf r}^{-\alpha_i}-1,
\end{align}
where we defined $M_1 = C_i^{-\alpha_i} \cdot (2T_t^{k}(\frac{3}{4})-1)^{-\alpha_i}$. Combining \eqref{eq3.9x} and \eqref{eq3.11x} yields
\begin{equation}\begin{split}
\int_Y \varphi_Y d\mathbb{P} \times \mu & \geq \sum_{u \in \Sigma} \sum_{n=0}^{\infty} \sum_{\mathbf r \in \Sigma_R^n} \int_{[u t^{k} \mathbf r s] \times (A_u \cup B_u)} \varphi_Y d\mathbb{P} \times \mu \\
& \geq \sum_{u \in \Sigma} \sum_{n=0}^{\infty} \sum_{\mathbf r \in \Sigma_R^n} \mathbb{P}\big([u t^{k} \mathbf r s]\big) \int_{A_u \cup B_u}  M_1 \cdot  L_{\mathbf r}^{-\alpha_i} d \mu(x) \\
& = M_2 \cdot \sum_{n=0}^{\infty} \gamma^n, \label{eq3.12d}
\end{split}\end{equation}
where
\begin{align*}
M_2 = M_1 \cdot p_t^k p_s \cdot \sum_{u \in \Sigma} p_u \mu(A_u \cup B_u) = M_1 \cdot p_t^k p_s \cdot \mathbb{P} \times \mu(Y).
\end{align*}
Almost every orbit that starts in $\Sigma^{\mathbb{N}} \times (\frac{1}{2},\frac{3}{4})$ will eventually enter $\Sigma^{\mathbb{N}} \times (\frac{1}{2},1)$ under applications of $F$. Conversely, almost every orbit that starts in $\Sigma^{\mathbb{N}} \times (\frac{1}{2},1)$ will eventually enter $\Sigma^{\mathbb{N}} \times (\frac{1}{2},\frac{3}{4})$, either via $\bigcup_{j \in \Sigma} [j] \times A_j$ or via $\bigcup_{j \in \Sigma} [j] \times B_j$. Hence, we have $\bigcup_{n=0}^{\infty} F^{-n} Y = \Sigma^{\mathbb{N}} \times [0,1]$ up to some set of measure zero. This together with the $F$-invariance of $\mathbb{P} \times \mu$ yields
\begin{align*}
1 = \mathbb{P} \times \mu(\Sigma^{\mathbb{N}} \times [0,1]) \leq \sum_{n=0}^{\infty} \mathbb{P} \times \mu(F^{-n} Y) = \sum_{n=0}^{\infty} \mathbb{P} \times \mu(Y).
\end{align*}
This gives $\mathbb{P} \times \mu(Y) > 0$ and so $M_2 > 0$. Hence, from \eqref{eq3.12d} and $\gamma \geq 1$ it now follows that
\begin{equation}\label{eq3.13s}
\int_Y \varphi_Y d\mathbb{P} \times \mu = \infty.
\end{equation}
On the other hand, since $\mu$ is a probability measure by assumption, we obtain from the Ergodic Decomposition Theorem, see e.g.~\cite[Theorem 6.2]{einsiedler11}, that there exists a probability space $(E,\mathcal{E},\nu)$ and a measurable map $e \mapsto \mu_e$ with $\mu_e$ an $F$-invariant ergodic probability measure for $\nu$-a.e.~$e \in E$, such that
\begin{align*}
\int_Y \varphi_Y d\mathbb{P} \times \mu = \int_E \Big(\int_Y \varphi_Y d\mu_e\Big) d\nu(e).
\end{align*}
For each $e \in E$ for which $\mu_e$ is an $F$-invariant ergodic probability measure we have $\int_Y \varphi_Y d\mu_e = \mu_e(X) = 1$ if $\mu_e(Y) > 0$ by Lemma \ref{l:kac} and we have $\int_Y \varphi_Y d\mu_e = 0$ if $\mu_e(Y) = 0$. This gives
\begin{align*}
\int_Y \varphi_Y d\mathbb{P} \times \mu \leq \nu(E) = 1,
\end{align*}
which is in contradiction with \eqref{eq3.13s}.
\end{proof}

\subsection{The case $\eta < 1$}\label{subsec3.2} In this subsection we will prove Theorem \ref{thrm1.1b} and Corollary \ref{cor1.1}. For this we wil identify a suitable set of functions which is preserved by the Perron-Frobenius operator $P = P_{F,\mathbf p}$ associated to $F$ and $\mathbf p$ as given in \eqref{eqn3.22}. We will do this in a number of steps in a way that is similar to the approach of Section 2 in \cite{LSV}.

\medskip
Suppose $\eta < 1$. On $[0,1]$ we define for each $j \in \Sigma$ the functions $x \mapsto y_j(x)$ and $x \mapsto \xi_j(x)$ by $y_j(x) = (T_j|_{[0,\frac{1}{2}]})^{-1} (x)$ and $\xi_j(x) = (2y_j)^{\alpha_j}$. Furthermore, we define on $[0,1]$ the function $z(x) = \frac{x+1}{2}$ and on $(\frac{1}{2},1]$ we define for each $r \in \Sigma_R$ the function $z_r(x)=(T_r|_{(\frac{1}{2},1]})^{-1} (x)$. Whenever convenient, we will just write $y_j$ for $y_j(x)$ and similarly for $\xi_j$, $z$ and $z_r$. Writing $p_S = \sum_{s \in \Sigma_S} p_s$, we then have 
\begin{align}\label{eq5}
P f(x) = \begin{cases}
\sum_{j \in \Sigma} p_j \frac{f(y_j)}{1+(\alpha_j+1)\xi_j} + p_S \frac{f(z)}{2}, & x \in [0,\frac{1}{2}]\\
\sum_{j \in \Sigma} p_j \frac{f(y_j)}{1+(\alpha_j+1)\xi_j} + p_S \frac{f(z)}{2} + \sum_{r \in \Sigma_R} p_r \frac{f(z_r)}{DR_{\alpha,K}(z_r)}, & x \in (\frac{1}{2},1].
\end{cases}
\end{align}
Note that $x \mapsto y_j(x)$, $x \mapsto \xi_j(x)$, $x \mapsto z(x)$ and $x \mapsto z_r(x)$ are increasing and continuous on $(0,\frac{1}{2}]$ and $(\frac{1}{2},1]$. This in combination with the fact that $R_{\alpha,K}$ is $C^1$ on $(\frac{1}{2},1]$ with increasing derivative gives that the set
$$\mathcal{C}_0 = \Big\{f \in L^1(\lambda): f \geq 0, \text{$f$ decreasing and continuous on $\Big(0,\frac{1}{2}\Big]$ and $\Big(\frac{1}{2},1\Big]$}\Big\}$$
is preserved by $P$, i.e.~$P \mathcal{C}_0 \subseteq \mathcal{C}_0$.

\medskip
Since $\eta < 1$, we have $\gamma = \sup\{\delta \geq 0:  \sum_{r \in \Sigma_R} p_r K_r^{-\delta} < 1 \} > \alpha_{\min}$, so $(\alpha_{\min},\gamma)$ is non-empty. In the remainder of this subsection we fix a $\beta \in (\alpha_{\min},\gamma) \cap (0,1]$. We set $\alpha_{\max} = \max\{\alpha_j: j \in \Sigma\}$ and $d = \alpha_{\max}+2$. We need the following two lemma's.

\begin{lemma}\label{lemma3.2}
For each $\alpha > 0$ the function $x \mapsto \frac{(1+x)^{d}}{1+(\alpha+1)x}$ is increasing on $[0,1]$.
\end{lemma}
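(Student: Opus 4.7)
The plan is to compute $f'(x)$ directly, where $f(x) = \frac{(1+x)^d}{1+(\alpha+1)x}$, and check that it is non-negative on $[0,1]$. Applying the quotient rule and factoring out $(1+x)^{d-1}$ from the numerator gives
\[
f'(x) \;=\; \frac{(1+x)^{d-1}\bigl[d\bigl(1+(\alpha+1)x\bigr) - (1+x)(\alpha+1)\bigr]}{\bigl(1+(\alpha+1)x\bigr)^2}.
\]
The bracket expands to
\[
d + d(\alpha+1)x - (\alpha+1) - (\alpha+1)x \;=\; (d-\alpha-1) + (d-1)(\alpha+1)\,x,
\]
so it suffices to show that this affine-in-$x$ expression is non-negative on $[0,1]$.

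Since $d = \alpha_{\max}+2$ and the statement is applied to the exponents $\alpha = \alpha_j$ arising in the system (so $\alpha \leq \alpha_{\max}$), one has $d-\alpha-1 \geq 1 > 0$, which handles the constant term. The coefficient of $x$ is $(d-1)(\alpha+1) = (\alpha_{\max}+1)(\alpha+1) > 0$, so the bracket is a sum of two non-negative terms on $[0,1]$. Together with $(1+x)^{d-1} > 0$ and the positive denominator, this yields $f'(x) \geq 0$ on $[0,1]$, proving that $f$ is increasing there.

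There is no real obstacle in this argument; it is a one-line differentiation followed by an inspection of signs. The only subtlety is making explicit that the bound $\alpha \leq \alpha_{\max}$ (equivalently $d \geq \alpha+2$) is what drives the positivity of the constant term $d-\alpha-1$, so the lemma should be read with $\alpha$ ranging over the finitely many $\alpha_j$ of the system, consistent with the definition of $d$.
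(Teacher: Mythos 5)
Your proof is correct and follows essentially the same route as the paper: compute the derivative via the quotient rule and verify that its numerator is non-negative. Your closing observation is also accurate---the paper's chain of inequalities likewise uses $d \geq \alpha+1$ (so, $\alpha \leq \alpha_{\max}+1$) at its final step, even though the lemma is stated for arbitrary $\alpha > 0$; in the application this is always satisfied since $\alpha = \alpha_j \leq \alpha_{\max}$.
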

\begin{proof}
Set
$$f_{\alpha}(x) = \frac{(1+x)^{d}}{1+(\alpha+1)x}, \qquad x \in [0,1].$$
Furthermore, set $g(x) = (1+x)^{d}$ and $h_{\alpha}(x) = 1+(\alpha+1)x$ where $x \in [0,1]$. Then
\begin{align*}
f_{\alpha}'(x) = \frac{h_{\alpha}(x) g'(x) - g(x) h_{\alpha}'(x)}{h_{\alpha}(x)^2}.
\end{align*}
We have
\begin{align*}
h_{\alpha}(x)g'(x) &= (1+(\alpha+1)x) \cdot d(1+x)^{d-1} \\  & \geq (1+x)^{d} \cdot d
 \geq  (1+x)^{d} \cdot (\alpha+1) \\
&= g(x) h_{\alpha}'(x),
\end{align*}
so $f_{\alpha}'(x) \geq 0$ holds for all $x \in [0,1]$.
\end{proof}

Define for each $K > 0$ and $b \geq 0$ the function $H_{K,b}: [\frac{1}{2},1] \rightarrow \mathbb{R}$ by
$$H_{K,b}(x) = \frac{(K+2(1-K)(x-\frac{1}{2}))^{b}}{K+4(1-K)(x-\frac{1}{2})}, \qquad x \in \Big[\frac{1}{2},1\Big].$$
\begin{lemma}\label{lemma3.2b}
Let $K > 0$ and $b \geq 0$.
\begin{enumerate}[(i)]
\item If $b \geq 2$, then $H_{K,b}$ is increasing.
\item If $b \leq 1$, then $H_{K,b}$ is decreasing.
\end{enumerate}
\end{lemma}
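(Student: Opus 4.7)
The plan is to differentiate $H_{K,b}$ directly and reduce the sign of the derivative to a simple linear expression in $u := x-\tfrac12 \in [0,\tfrac12]$. Introduce the auxiliary functions
\[
g(u) = K + 2(1-K)u, \qquad h(u) = K + 4(1-K)u,
\]
so that $H_{K,b}(x) = g(u)^{b}/h(u)$. Since $K \in (0,1)$, both $g$ and $h$ are strictly positive on $[0,\tfrac12]$, and the key algebraic observation is $h'(u) = 4(1-K) = 2 g'(u)$.

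Next I would compute
\[
H_{K,b}'(x) \;=\; \frac{g^{b-1}\bigl(b\, g'\, h - g\, h'\bigr)}{h^{2}}
\;=\; \frac{g^{b-1} g'\bigl(b\, h - 2 g\bigr)}{h^{2}},
\]
using $h' = 2g'$. Because $g^{b-1}, h^{2}$ and $g' = 2(1-K)$ are all positive on $[0,\tfrac12]$, the sign of $H_{K,b}'$ coincides with that of
\[
b\,h(u) - 2\,g(u) \;=\; (b-2)K \;+\; 4(b-1)(1-K)u.
\]

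From this expression both claims follow immediately. If $b \geq 2$, then $(b-2)K \geq 0$ and $4(b-1)(1-K)u \geq 0$ for all $u \in [0,\tfrac12]$, so $bh - 2g \geq 0$ and hence $H_{K,b}$ is increasing, proving (i). If $b \leq 1$, then $(b-2)K \leq -K < 0$ and $4(b-1)(1-K)u \leq 0$, so $bh - 2g \leq -K < 0$ and $H_{K,b}$ is decreasing, proving (ii).

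There is no real obstacle here; the only thing to notice is the fortuitous relation $h' = 2g'$ that makes the factor $bg'h - gh'$ collapse to a linear function of $u$ with explicit sign in the two regimes $b \geq 2$ and $b \leq 1$. The intermediate range $b \in (1,2)$ is not covered by the lemma, and indeed the linear function $(b-2)K + 4(b-1)(1-K)u$ changes sign on $[0,\tfrac12]$ for such $b$, which is consistent with the statement.
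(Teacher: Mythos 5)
Your proof is correct and follows essentially the same route as the paper: differentiate $H_{K,b}$, exploit the identity $h' = 2g'$ to collapse the numerator to $g^{b-1} g' (bh - 2g)$, and determine the sign of the remaining factor. The only cosmetic difference is that the paper bounds the ratio $g_K/f_K$ between $1$ and $2$, whereas you expand $bh-2g$ as an explicit affine function of $u$; both observations are equivalent. (Note that, like the paper's own argument, yours uses $K \in (0,1)$ to fix the sign of $g' = 2(1-K)$, which is stronger than the stated hypothesis $K > 0$ but is what holds in all applications.)
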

\begin{proof}
Set $f_K(x) =K+2(1-K)(x-\frac{1}{2})$ and $g_K(x) = K+4(1-K)(x-\frac{1}{2})$ where $x \in [\frac{1}{2},1]$. Note that $g_K'(x) = 2f_K'(x)$. Then for $x \in (\frac{1}{2},1)$
\begin{align*}
H_{K,b}'(x) &= \frac{g_K(x) \cdot b \cdot f_K(x)^{b-1}f_K'(x) - f_K(x)^b \cdot g_{K}'(x)}{g_{K}(x)^2}\\
&= \frac{f_K(x)^b \cdot f_K'(x) \big(b \cdot \frac{g_K(x)}{f_K(x)}-2\big)}{g_K(x)^2}.
\end{align*}
If $b \geq 2$, then
\begin{align*}
b \cdot \frac{g_K(x)}{f_K(x)}-2 \geq 2 \cdot \frac{g_K(x)}{f_K(x)}-2 \geq 2 \cdot \frac{f_K(x)}{f_K(x)}-2 = 0
\end{align*}
and thus $H_{K,b}'(x) \geq 0$. This proves (i). If $b \leq 1$, then
\begin{align*}
b \cdot \frac{g_K(x)}{f_K(x)}-2 \leq \frac{g_K(x)}{f_K(x)}-2 \leq \frac{2 f_K(x)}{f_K(x)}-2 = 0
\end{align*}
and thus $H_{K,b}'(x) \leq 0$. This proves (ii). 
\end{proof}

We can now prove the following lemma.

\begin{lemma}\label{lemma3.3}
The set
$$\mathcal{C}_1 = \Big\{f \in \mathcal{C}_0: x \mapsto x^{d} f(x) \text{ incr.~on $\Big(0,\frac{1}{2}\Big]$, } x \mapsto \Big(x-\frac{1}{2}\Big)^{d} f(x) \text{ incr.~on $\Big(\frac{1}{2},1\Big]$}\Big\}$$
is preserved by $P$.
\end{lemma}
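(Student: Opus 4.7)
The plan is to verify, for an arbitrary $f \in \mathcal{C}_1$, the two monotonicity requirements in the definition of $\mathcal{C}_1$ applied to $Pf$. Since $P\mathcal{C}_0 \subseteq \mathcal{C}_0$ has already been established, the new content is to check that $x \mapsto x^d (Pf)(x)$ is increasing on $(0, \tfrac{1}{2}]$ and $x \mapsto (x - \tfrac{1}{2})^d (Pf)(x)$ is increasing on $(\tfrac{1}{2}, 1]$. Because a sum of nonnegative increasing functions is increasing, I will handle each summand in \eqref{eq5} separately: those involving $f(y_j)$, the one involving $f(z)$, and those involving $f(z_r)$.

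For the $f(y_j)$ summands, the crucial identity is $x = y_j(1+\xi_j)$, which follows directly from the definitions of $y_j$ and $\xi_j$. This yields $x^d = y_j^d(1+\xi_j)^d$, so
\[
\frac{x^d f(y_j)}{1+(\alpha_j+1)\xi_j} = y_j^d f(y_j) \cdot \frac{(1+\xi_j)^d}{1+(\alpha_j+1)\xi_j}.
\]
The first factor is increasing in $y_j$ by $f \in \mathcal{C}_1$, and the second is increasing in $\xi_j$ by Lemma \ref{lemma3.2}, which applies because $d = \alpha_{\max}+2 \geq \alpha_j+1$. Monotonicity of $y_j$ and $\xi_j$ in $x$ then settles the $(0, \tfrac{1}{2}]$ case. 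To transfer this to $(\tfrac{1}{2}, 1]$ with the weight $(x-\tfrac{1}{2})^d$ instead of $x^d$, I will factor $(x-\tfrac{1}{2})^d = \bigl(1 - \tfrac{1}{2x}\bigr)^d \cdot x^d$ and observe that $\bigl(1 - \tfrac{1}{2x}\bigr)^d$ is itself a nonnegative increasing function of $x$ on $(\tfrac{1}{2},1]$.

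For the $f(z)$ term I will use the identities $z - \tfrac{1}{2} = x/2$ on $(0, \tfrac{1}{2}]$, giving $x^d f(z) = 2^d (z - \tfrac{1}{2})^d f(z)$ which is increasing in $z$ by $\mathcal{C}_1$, and $x - \tfrac{1}{2} = 2(z - \tfrac{3}{4})$ on $(\tfrac{1}{2}, 1]$, giving $(x-\tfrac{1}{2})^d f(z) = 2^d \bigl(\tfrac{z-3/4}{z-1/2}\bigr)^d \cdot (z-\tfrac{1}{2})^d f(z)$. Since $\tfrac{z-3/4}{z-1/2} = 1 - \tfrac{1}{4(z-1/2)}$ is manifestly increasing on $(\tfrac{3}{4}, 1]$, this is a product of two nonnegative increasing functions of $z$, hence increasing in $x$.

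The main technical obstacle, and the reason Lemma \ref{lemma3.2b} is needed, concerns the $f(z_r)$ summands on $(\tfrac{1}{2}, 1]$. Setting $w = z_r - \tfrac{1}{2}$, the defining equation $R_{\alpha_r, K_r}(z_r) = x$ becomes $x - \tfrac{1}{2} = w\bigl(K_r + 2(1-K_r)w\bigr)$, while $DR_{\alpha_r, K_r}(z_r) = K_r + 4(1-K_r)w$. Substituting gives
\[
\frac{(x - \tfrac{1}{2})^d f(z_r)}{DR_{\alpha_r,K_r}(z_r)} = (z_r - \tfrac{1}{2})^d f(z_r) \cdot H_{K_r, d}(z_r),
\]
where the first factor is increasing in $z_r$ by $f \in \mathcal{C}_1$, and the second is increasing in $z_r$ by Lemma \ref{lemma3.2b}(i), whose hypothesis $b \geq 2$ is exactly the reason for setting $d = \alpha_{\max} + 2$. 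Since $z_r$ is increasing in $x$, this completes the verification; summing all the contributions yields $Pf \in \mathcal{C}_1$.
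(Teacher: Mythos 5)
Your proof is correct and follows essentially the same route as the paper's: the same term-by-term decomposition of $Pf$, the same identities $x = y_j(1+\xi_j)$, $z - \tfrac{1}{2} = \tfrac{x}{2}$, and $x - \tfrac{1}{2} = (z_r - \tfrac{1}{2})(K_r + 2(1-K_r)(z_r-\tfrac{1}{2}))$, and the same appeals to Lemma \ref{lemma3.2} and Lemma \ref{lemma3.2b}(i). The only cosmetic differences are that you write the $f(z)$ factor on $(\tfrac{1}{2},1]$ in terms of $(z-\tfrac{3}{4})/(z-\tfrac{1}{2})$ whereas the paper simplifies it to $(2-\tfrac{1}{x})^d$, and you helpfully spell out that the inequalities $d \geq \alpha_j+1$ and $d \geq 2$ are what make Lemmas \ref{lemma3.2} and \ref{lemma3.2b}(i) applicable.
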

\begin{proof}
Let $f \in \mathcal{C}_1$. Let $x \in (0,\frac{1}{2}]$. Using that for each $j \in \Sigma$ we have $x = y_j(1+\xi_j)$ and that $z(x)-\frac{1}{2} = \frac{x}{2}$, we obtain
\begin{align*}
x^{d} Pf(x) &= \sum_{j \in \Sigma} p_j \Big(\frac{x}{y_j}\Big)^{d} \frac{y_j^{d} f(y_j)}{1+(\alpha_j+1)\xi_j} + \frac{p_S}{2} \Big(\frac{x}{z-\frac{1}{2}}\Big)^{d} \Big(z - \frac{1}{2}\Big)^{d} f(z) \\
&= \sum_{j \in \Sigma} p_j \frac{(1+\xi_j)^{d}}{1+(\alpha_j+1)\xi_j} \cdot y_j^{d} f(y_j) + p_S \cdot 2^{d} \cdot \Big(z - \frac{1}{2}\Big)^{d} f(z).
\end{align*}
Because $x \mapsto \xi_j(x)$ is increasing for each $j \in \Sigma$ it follows from Lemma \ref{lemma3.2} that $x \mapsto \frac{(1+\xi_j(x))^{d}}{1+(\alpha_j+1)\xi_j(x)}$ is increasing for each $j \in \Sigma$. Combining this with the fact that $f \in \mathcal{C}_1$, that $y_j \in (0,\frac{1}{2}]$ for each $j \in \Sigma$ and that $z \in (\frac{1}{2},1]$ we conclude that $x \mapsto x^{d} Pf(x)$ is increasing on $(0,\frac{1}{2}]$.

\medskip
Now let $x \in (\frac{1}{2},1]$. Then
\begin{align*}
\Big(x-\frac{1}{2}\Big)^{d} Pf(x) =&  \Big(\frac{x-\frac{1}{2}}{x}\Big)^{d} \sum_{j \in \Sigma} p_j \Big(\frac{x}{y_j}\Big)^{d} \frac{y_j^{d} f(y_j)}{1+(\alpha_j+1)\xi_j} \\
& + \frac{p_S}{2} \Big(\frac{x-\frac{1}{2}}{z-\frac{1}{2}}\Big)^{d} \Big(z - \frac{1}{2}\Big)^{d} f(z) \\
& + \sum_{r \in \Sigma_R} \frac{p_r}{DR_{\alpha_r,K_r}(z_r)} \Big(\frac{x-\frac{1}{2}}{z_r-\frac{1}{2}}\Big)^{d} \Big(z_r - \frac{1}{2}\Big)^{d} f(z_r).
\end{align*}
Using again that for each $j \in \Sigma$ we have $x = y_j(1+\xi_j)$, that $z-\frac{1}{2} = \frac{x}{2}$ and also that $x- \frac{1}{2} = K_r(z_r - \frac{1}{2})+2(1-K_r)(z_r-\frac{1}{2})^2$ for each $r \in \Sigma_R$, we obtain
\begin{align*}
\Big(x-\frac{1}{2}\Big)^{d} Pf(x) =&  \Big(1-\frac{1}{2x}\Big)^{d} \sum_{j \in \Sigma} p_j \frac{(1+\xi_j)^{d}}{1+(\alpha_j+1)\xi_j} \cdot y_j^{d} f(y_j) \\
& + \frac{p_S}{2} \Big(2-\frac{1}{x}\Big)^{d} \Big(z - \frac{1}{2}\Big)^{d} f(z) \\
& + \sum_{r \in \Sigma_R} p_r \frac{(K_r+2(1-K_r)(z_r-\frac{1}{2}))^{d}}{K_r+4(1-K_r)(z_r-\frac{1}{2})} \Big(z_r - \frac{1}{2}\Big)^{d} f(z_r).
\end{align*}
Note that $x \mapsto (1-\frac{1}{2x})^{d}$ and $x \mapsto (2-\frac{1}{x})^{d}$ are positive and increasing on $(\frac{1}{2},1]$. Combining this with Lemma \ref{lemma3.2} and Lemma \ref{lemma3.2b}(i) and with the fact that $f \in \mathcal{C}_1$ we conclude that $x \mapsto (x- \frac{1}{2})^{d} Pf(x)$ is increasing on $(\frac{1}{2},1]$.
\end{proof}

We set $t_1 = \alpha_{\min} + 1 - \beta$ and $t_2 = 1-\beta$. It follows from $\beta \in (\alpha_{\min},1]$ that $t_1 \in [\alpha_{\min},1)$ and $t_2 \in [0,1-\alpha_{\min})$.

\begin{lemma}\label{lemma3.4h}
For sufficiently large $a_1,a_2 > 0$, the set
$$\mathcal{C}_2 = \Big\{ f \in \mathcal{C}_1: f(x) \leq a_1 x^{-t_1} \text{ on $\Big(0,\frac{1}{2}\Big]$, } f(x) \leq a_2 \Big(x-\frac{1}{2}\Big)^{-t_2} \text{ on $\Big(\frac{1}{2},1\Big]$}, \int_0^1 f d\lambda = 1 \Big\}$$
is preserved by $P$.
\end{lemma}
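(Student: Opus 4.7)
The plan is to verify that if $f\in\mathcal{C}_2$, then $Pf$ inherits each of the three defining properties of $\mathcal{C}_2$: membership in $\mathcal{C}_1$ (immediate from Lemma~\ref{lemma3.3}), unit integral (immediate from $P$ being a Perron--Frobenius operator, so $\int_0^1 Pf\,d\lambda = \int_0^1 f\,d\lambda = 1$), and the two pointwise bounds $Pf(x)\le a_1 x^{-t_1}$ on $(0,\frac{1}{2}]$ and $Pf(x)\le a_2(x-\frac{1}{2})^{-t_2}$ on $(\frac{1}{2},1]$. All the substantive work therefore lies in the two pointwise bounds for $a_1,a_2$ chosen sufficiently large.

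For the bound on $(0,\frac{1}{2}]$ I would expand $Pf$ via \eqref{eq5}, substitute the pointwise bounds on $f$, and use the identities $x = y_j(1+\xi_j)$ and $z-\frac{1}{2} = \frac{x}{2}$ together with $t_1-t_2 = \alpha_{\min}$ to reach
\[ x^{t_1} Pf(x) \le a_1 \sum_{j\in\Sigma} p_j\,\phi_j(\xi_j) + a_2 p_S\, 2^{t_2-1}\, x^{\alpha_{\min}}, \qquad \phi_j(\xi):=\frac{(1+\xi)^{t_1}}{1+(\alpha_j+1)\xi}. \]
The analytic core is the concavity estimate $(1+\xi)^{t_1}\le 1+t_1\xi$ (valid since $t_1\le 1$), which yields both $\phi_j(\xi)\le 1$ and the refined inequality $1-\phi_j(\xi)\ge (\alpha_j+1-t_1)\xi/(1+(\alpha_j+1)\xi)$. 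Combined with $\xi_i\ge x^{\alpha_{\min}}$ for any index $i$ with $\alpha_i = \alpha_{\min}$ (which uses $y_i\ge x/2$ because $\xi_i\le 1$ on $(0,\frac{1}{2}]$), this provides a quantitative slack $1-\sum_j p_j\phi_j(\xi_j)\ge c\, x^{\alpha_{\min}}$ for an explicit constant $c>0$. The $(0,\frac{1}{2}]$ bound then reduces to the linear inequality $a_1 c \ge a_2 p_S 2^{t_2-1}$.

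For the bound on $(\frac{1}{2},1]$ I would split $Pf$ into its left-branch contribution, the $\Sigma_S$ cross-term $p_S f(z)/2$, and the $\Sigma_R$ contribution. For the $\Sigma_R$ piece the substitution $x-\frac{1}{2}=(z_r-\frac{1}{2})(K_r+2(1-K_r)(z_r-\frac{1}{2}))$ rewrites it as $a_2(x-\frac{1}{2})^{-t_2}\sum_{r} p_r H_{K_r,t_2}(z_r)$; Lemma~\ref{lemma3.2b}(ii) (applicable since $t_2\le 1$) then gives $H_{K_r,t_2}(z_r)\le H_{K_r,t_2}(\frac{1}{2}) = K_r^{-\beta}$, so this piece is bounded by $a_2 q (x-\frac{1}{2})^{-t_2}$ with $q := \sum_{r\in\Sigma_R} p_r K_r^{-\beta} < 1$ by our choice $\beta<\gamma$. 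For the $\Sigma_S$ cross-term I would exploit that $z = (x+1)/2\in[\frac{3}{4},1]$ is bounded away from $\frac{1}{2}$: combining the monotonicity of $f$ on $(\frac{1}{2},1]$ with $\int_0^1 f\,d\lambda=1$ yields the $a_2$-free estimate $f(z)\le 1/(z-\frac{1}{2})\le 4$, hence $p_S f(z)/2\le 2 p_S$. The left-branch piece is controlled crudely by $a_1 x^{-t_1}$. Multiplying through by $(x-\frac{1}{2})^{t_2}$ then produces
\[ (x-\tfrac{1}{2})^{t_2} Pf(x)\le a_1 M + 2^{\beta} p_S + a_2 q, \]
where $M := \sup_{x\in(1/2,1]}(x-\frac{1}{2})^{t_2} x^{-t_1}$ is a finite constant. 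The $(\frac{1}{2},1]$ bound thus reduces to $a_2(1-q)\ge a_1 M + 2^{\beta} p_S$.

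It remains to pick $a_1,a_2>0$ simultaneously satisfying the two linear constraints, which can be done by taking $a_2$ large and $a_1/a_2$ in the admissible window dictated by them (both use $q<1$). The main obstacle is exactly this coupling between the two intervals: the cross-term on $(0,\frac{1}{2}]$ forces $a_2$ into the $(0,\frac{1}{2}]$ bound, while the left-branch term on $(\frac{1}{2},1]$ forces $a_1$ into the $(\frac{1}{2},1]$ bound, so the two conditions must be played off against each other. What makes this workable is the combination of two asymmetric estimates: the deficit $1-\sum_j p_j\phi_j(\xi_j)\ge c\,x^{\alpha_{\min}}$ on $(0,\frac{1}{2}]$, which absorbs the $a_2$ cross-term against the factor $x^{\alpha_{\min}}$ coming from $t_1-t_2 = \alpha_{\min}$; and the integrability-plus-monotonicity bound $f(z)\le 4$ on $z\in[\frac{3}{4},1]$, which strips the $\Sigma_S$ cross-term on $(\frac{1}{2},1]$ of its dependence on $a_2$.
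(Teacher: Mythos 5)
Your outline of the $(0,\frac12]$ estimate matches the paper's: concavity $(1+\xi)^{t_1}\le 1+t_1\xi$, isolating the index $i$ with $\alpha_i=\alpha_{\min}$, and the key identity $x^{t_1-t_2}=x^{\alpha_{\min}}\le\xi_i$ together give a constraint of the form $a_1\gtrsim a_2$, i.e.\ $a_2\,p_S\,2^{t_2-1}(\alpha_i+2)\le p_i\beta\,a_1$. Your treatment of the $\Sigma_R$-term on $(\frac12,1]$ via Lemma~\ref{lemma3.2b}(ii) is also correct. The problem is your treatment of the left-branch contribution $\sum_j p_j f(y_j)/(1+(\alpha_j+1)\xi_j)$ on $(\frac12,1]$.

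You bound that piece ``crudely by $a_1 x^{-t_1}$'', which makes it contribute the term $a_1 M$ with $M=\sup_{(1/2,1]}(x-\tfrac12)^{t_2}x^{-t_1}$, and so your $(\frac12,1]$ constraint reads $a_1 M + 2^{\beta}p_S \le a_2(1-q)$. Together with the $(0,\frac12]$ constraint $a_1\ge a_2\,p_S 2^{t_2-1}(\alpha_i+2)/(p_i\beta)$, this is a genuinely circular pair of inequalities in $a_1/a_2$: it is solvable only if $p_S 2^{t_2-1}(\alpha_i+2)M/(p_i\beta) < 1-q$, and this fails for many admissible systems (e.g.\ when the minimizing index $i$ carries very small probability $p_i$, since then the $(0,\frac12]$ constraint forces $a_1/a_2$ to be enormous). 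You explicitly flag the coupling as the main obstacle and claim a non-empty ``admissible window'' for $a_1/a_2$, but you never verify it exists, and in general it does not.

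The fix is the observation you already applied, but only to the $\Sigma_S$ cross-term: for $x\in(\frac12,1]$ the preimage $y_j$ lies in $(\frac14,\frac12]$ (because $T_j(\frac14)\le\frac12<x$), \emph{bounded away from the singularity at $0$}, so just as $f(z)\le 4$ follows from $\int_0^1 f\,d\lambda=1$ and monotonicity of $f$, one gets an $a_1$- and $a_2$-free constant bound $f(y_j)/(1+(\alpha_j+1)\xi_j)\le M'$; the paper derives $M'=2^{d+1}$ using that $x\mapsto x^d f(x)$ is increasing and $y_j>\tfrac14$. With that, the $(\frac12,1]$ constraint becomes an inequality in $a_2$ alone, $M'(1+\tfrac{p_S}{2})2^{-t_2}/a_2 + q \le 1$, which is satisfiable for $a_2$ large since $q<1$. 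One then picks $a_1$ large afterwards; the circularity disappears. The rest of your argument then goes through as written.
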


\begin{proof}
Let $f \in \mathcal{C}_2$. First, let $x \in (\frac{1}{2},1]$. For each $j \in \Sigma$ we have $y_j \leq \frac{1}{2}$ and thus, using that $f \in \mathcal{C}_1$,
\begin{align*}
y_j^{d} f(y_j) \leq 2^{-d} f\Big(\frac{1}{2}\Big) \leq 2^{-d} \cdot 2 \cdot \int_0^{\frac{1}{2}}f(u) du \leq 2^{-d+1}.
\end{align*}
Furthermore, for each $j \in \Sigma$ we have
\begin{align*}
T_j\Big(\frac{1}{4}\Big) = \frac{1}{4}(1+2^{-\alpha_j}) \leq \frac{1}{4}(1+1) = \frac{1}{2},
\end{align*}
which gives $y_j \in (\frac{1}{4},\frac{1}{2}]$. Setting $M := 2^{d+1}$ we obtain for each $j \in \Sigma$ that
\begin{align}\label{eq6}
\frac{f(y_j)}{1+(\alpha_j+1) \xi_j} = y_j^{d} f(y_j) \cdot \frac{y_j^{-d}}{1+(\alpha_j+1) \cdot (2y_j)^{\alpha_j}} \leq 2^{-d+1} \cdot 4^{d} = M.
\end{align}
It also follows from $f \in \mathcal{C}_1$ that
\begin{align*}
\Big(z-\frac{1}{2}\Big)^{d} f(z) \leq \Big(1-\frac{1}{2}\Big)^{d} f(1) \leq 2^{-d} \cdot 2 \cdot \int_{\frac{1}{2}}^1 f(u) du \leq 2^{-d+1}.
\end{align*}
Using that $z \in (\frac{3}{4},1]$, this gives
\begin{align}\label{eq7}
f(z) \leq 2^{-d+1} \cdot \Big(z - \frac{1}{2}\Big)^{-d} \leq 2^{-d+1} \cdot \Big(\frac{3}{4}-\frac{1}{2}\Big)^{-d} = M.
\end{align}
Combining \eqref{eq5}, \eqref{eq6} and \eqref{eq7} and using that $f \in \mathcal{C}_2$ gives
\begin{align}\label{eq7a}
Pf(x) & \leq M + \frac{p_S}{2} \cdot M + \sum_{r \in \Sigma_R} \frac{p_r}{DR_{\alpha_r,K_r}(z_r)} \cdot a_2 \Big(z_r-\frac{1}{2}\Big)^{-t_2}.
\end{align}
For each $r \in \Sigma_R$ we have $x-\frac{1}{2} = K_r(z_r-\frac{1}{2})+2(1-K_r)(z_r-\frac{1}{2})^2$ and therefore
\begin{align*}
\frac{1}{DR_{\alpha_r,K_r}(z_r)}\Big(\frac{x-\frac{1}{2}}{z_r-\frac{1}{2}}\Big)^{t_2} = \frac{(K_r+2(1-K_r)(z_r-\frac{1}{2}))^{t_2}}{K_r+4(1-K_r)(z_r-\frac{1}{2})},
\end{align*}
which by Lemma \ref{lemma3.2b}(ii) can be bounded from above by $H_{K_r,t_2}(\frac{1}{2}) = K_r^{t_2-1}$. Furthermore, since $t_2 \geq 0$ we have $(x-\frac{1}{2})^{t_2} \leq 2^{-t_2}$. We obtain
\begin{align}\label{eq8}
Pf(x) \leq \Big\{ \frac{M(1 + \frac{p_S}{2}) \cdot 2^{-t_2}}{a_2} + \sum_{r \in \Sigma_R} p_r \cdot K_r^{t_2 -1} \Big\} \cdot a_2 \cdot \Big(x-\frac{1}{2}\Big)^{-t_2}.
\end{align}
We have $t_2 - 1 = -\beta$ and $\beta < \gamma$, so
\begin{align*}
\sum_{r \in \Sigma_R} p_r \cdot K_r^{t_2-1} = \sum_{r \in \Sigma_R} p_r \cdot K_r^{-\beta}  < 1.
\end{align*}
Hence, there exists an $a_2 > 0$ sufficiently large such that the term in curly brackets in \eqref{eq8} is bounded by 1.

\medskip
Now let $x \in (0,\frac{1}{2}]$. Using that $f \in \mathcal{C}_2$, it follows from \eqref{eq5} that
\begin{align}\label{eq9a}
Pf(x) \leq \sum_{j \in \Sigma} p_j \frac{a_1 \cdot y_j^{-t_1}}{1+(\alpha_j+1)\xi_j} + \frac{p_S \cdot a_2}{2} \cdot \Big(z-\frac{1}{2}\Big)^{-t_2}.
\end{align}
For each $j \in \Sigma$ we have, using that $x= y_j (1+\xi_j)$ and that $t_1 \in (0,1)$,
\begin{align}\label{eq9b}
\frac{y_j^{-t_1}}{1+(\alpha_j+1)\xi_j} = \frac{x^{-t_1} (1+\xi_j)^{t_1}}{1+(\alpha_j+1)\xi_j} \leq \frac{x^{-t_1}(1+t_1 \xi_j)}{1+(\alpha_j+1)\xi_j} \leq x^{-t_1}.
\end{align}
Fix an $i \in \Sigma$ with $\alpha_i = \alpha_{\min}$. Applying for each $j \in \Sigma \backslash \{i\}$ the bound \eqref{eq9b} to \eqref{eq9a} and using that $z- \frac{1}{2} = \frac{x}{2}$ yields
\begin{align}\label{eq9}
Pf(x) \leq \Big\{ p_i  \Big(\frac{x}{y_i}\Big)^{t_1} \cdot \frac{1}{1+(\alpha_i+1)\xi_i} + (1-p_i) + \frac{p_S \cdot a_2 \cdot 2^{t_2-1}}{a_1} \cdot x^{t_1-t_2} \Big\} \cdot a_1 \cdot x^{-t_1}.
\end{align}
It remains to find $a_1$ sufficiently large such that the term in curly brackets in \eqref{eq9} is bounded by 1. First of all, using again that $x = y_i(1+\xi_i)$ and that $t_1 \in (0,1)$ we get
\begin{align}\label{eq10}
\Big(\frac{x}{y_i}\Big)^{t_1} \cdot \frac{1}{1+(\alpha_i+1)\xi_i} = \frac{(1+\xi_i)^{t_1}}{1+(\alpha_i+1)\xi_i} \leq \frac{1+t_1 \xi_i}{1+(\alpha_i+1)\xi_i}.
\end{align}
Furthermore, we have
\begin{align}\label{eq11}
x^{t_1-t_2} = x^{\alpha_{\min}} = y_i^{\alpha_i} (1+\xi_i)^{\alpha_i} \leq y_i^{\alpha_i} \cdot 2^{\alpha_i} = \xi_i.
\end{align}
It follows from \eqref{eq10} and \eqref{eq11} that the term in curly brackets in \eqref{eq9} is bounded by
\begin{align}\label{eq12}
p_i \frac{1+t_1 \xi_i + \frac{p_i^{-1} \cdot p_S \cdot a_2 \cdot 2^{t_2-1}}{a_1} \cdot \xi_i \cdot (1+(\alpha_i+1)\xi_i)}{1+(\alpha_i+1)\xi_i} + (1-p_i).
\end{align}
Using that $1+(\alpha_i+1)\xi_i \leq \alpha_i + 2$ we get that the numerator in \eqref{eq12} is bounded by
\begin{align*}
1+\Big(t_1 + \frac{p_i^{-1} \cdot p_S \cdot a_2 \cdot 2^{t_2-1}(\alpha_i+2)}{a_1}\Big)\xi_i.
\end{align*}
Taking $a_1 > 0$ sufficiently large such that $t_1 + \frac{p_i^{-1} \cdot p_S \cdot a_2 \cdot 2^{t_2-1} (\alpha_i+2)}{a_1} \leq 1 \leq \alpha_i + 1$ now yields the result.
\end{proof}

\begin{lemma}\label{lemma3.5u}
The set $\mathcal{C}_2$ is compact with respect to the $L^1(\lambda)$-norm.
\end{lemma}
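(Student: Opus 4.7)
The plan is to establish sequential compactness of $\mathcal{C}_2$ in $L^1(\lambda)$ via Helly's selection theorem combined with the dominated convergence theorem. Given an arbitrary sequence $(f_n)_{n\in\mathbb N} \subseteq \mathcal{C}_2$, I would first extract a pointwise-convergent subsequence. Each $f_n$ is decreasing on $(0,\frac{1}{2}]$ and, by the bound $f_n(x) \leq a_1 x^{-t_1}$, uniformly bounded on every subinterval $[\frac{1}{k},\frac{1}{2}]$. Applying Helly's selection theorem on each such interval and diagonalising over $k$ yields a subsequence converging pointwise on $(0,\frac{1}{2}]$ to some decreasing function $f$. Running the same extraction on $(\frac{1}{2},1]$ using $f_n(x) \leq a_2 (x-\frac{1}{2})^{-t_2}$ produces a further subsequence, still denoted $(f_n)$, that converges pointwise on $(0,\frac{1}{2}] \cup (\frac{1}{2},1]$ to a non-negative $f$ that is decreasing on each interval.

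Next I would verify $f \in \mathcal{C}_2$. Non-negativity, decreasingness, the two pointwise bounds $f(x) \leq a_1 x^{-t_1}$ and $f(x) \leq a_2(x-\frac{1}{2})^{-t_2}$, and the monotonicity of $x \mapsto x^d f(x)$ and $x \mapsto (x-\frac{1}{2})^d f(x)$ all transfer to pointwise limits, so the only delicate condition is the continuity baked into $\mathcal{C}_0$. The key observation is that at any $x_0 \in (0,\frac{1}{2}]$, decreasingness of $f$ forces $f(x_0^-) \geq f(x_0) \geq f(x_0^+)$, while increasingness of $x \mapsto x^d f(x)$ together with continuity of $x \mapsto x^d$ at $x_0$ forces $f(x_0^-) \leq f(x_0) \leq f(x_0^+)$, so all three must coincide; the identical argument with $(x-\frac{1}{2})^d$ handles $(\frac{1}{2},1]$. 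Thus $f$ is continuous on each of the two half-open intervals.

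For the $L^1$-convergence itself I would invoke the dominating function
\[
\phi(x) = a_1 x^{-t_1}\mathbf{1}_{(0,\frac{1}{2}]}(x) + a_2 \Big(x-\frac{1}{2}\Big)^{-t_2}\mathbf{1}_{(\frac{1}{2},1]}(x),
\]
which lies in $L^1(\lambda)$ precisely because $t_1, t_2 \in [0,1)$. The dominated convergence theorem then delivers both $\int_0^1 f\,d\lambda = \lim_n \int_0^1 f_n \, d\lambda = 1$ (completing the verification that $f \in \mathcal{C}_2$) and $\|f_n - f\|_{L^1(\lambda)} \to 0$. The main obstacle I anticipate is the continuity step, since pointwise limits of monotone functions can develop jumps in general; what saves the day is that $\mathcal{C}_1$ imposes \emph{two} monotonicity conditions pulling in opposite directions, which squeeze any would-be jump to zero.
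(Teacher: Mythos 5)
Your argument is correct, and it reaches the same endpoint as the paper --- pointwise convergence of a subsequence followed by the dominated convergence theorem with the dominating function you write down --- but by a genuinely different route. The paper's proof works with the transformed functions $\phi_f(x) = x^{d} f(x)$ on $[0,\frac12]$ and $\psi_f(x) = (x-\frac12)^{d} f(x)$ on $[\frac12,1]$: it shows these families are uniformly bounded and uniformly Lipschitz (inequalities \eqref{eq3.25c}--\eqref{eq3.26c}), with $\phi_f(0)=\psi_f(\tfrac12)=0$, and invokes the Arzel\`a--Ascoli theorem to extract a uniformly convergent subsequence, from which pointwise convergence of the $f_{n_k}$ and continuity of the limit come automatically. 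You instead run Helly's selection theorem directly on the decreasing functions $f_n$, restricted to the exhausting compacta $[\frac1k,\frac12]$ where the $a_1 x^{-t_1}$ bound keeps things uniformly bounded, diagonalise, and then supply the continuity of the limit by the squeeze you describe: decreasingness gives $f(x_0^-)\ge f(x_0)\ge f(x_0^+)$, while increasingness of $x\mapsto x^{d}f(x)$ together with continuity of $x\mapsto x^{d}$ gives the reverse inequalities, so the one-sided limits collapse. That continuity step is valid and is precisely where the two opposing monotonicity constraints baked into $\mathcal C_1$ earn their keep. The trade-off is that your route is somewhat more elementary (no equicontinuity estimates, no Arzel\`a--Ascoli), whereas the paper's Lipschitz estimates on $\phi_f$ and $\psi_f$ are not throwaway work --- they are reused verbatim in part (2) of the proof of Theorem~\ref{thrm1.1b} to establish that $\frac{d\mu_{\mathbf p}}{d\lambda}$ is locally Lipschitz on $(0,\frac12]$ and $(\frac12,1]$, a fact your approach would need to re-derive separately.
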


\begin{proof}
For each $f \in \mathcal{C}_2$ let $\phi_f$ denote the continuous extension of $(0,\frac{1}{2}] \ni x \mapsto x^{d} f(x)$ to $[0,\frac{1}{2}]$ and let $\psi_f$ denote the continuous extension of $(\frac{1}{2},1] \ni x \mapsto (x-\frac{1}{2})^{d} f(x)$ to $[\frac{1}{2},1]$. Furthermore, we define $\mathcal{A}_1 = \{\phi_f: f \in \mathcal{C}_2\}$ and $\mathcal{A}_2 = \{\psi_f: f \in \mathcal{C}_2\}$. For each $f \in \mathcal{C}_2$ we have, for $x, y \in [0,\frac{1}{2}]$ with $x \geq y$, that
\begin{equation}\begin{split}\label{eq3.25c}
0 \leq \phi_f(x)-\phi_f(y) & \leq f(x)(x^{d} - y^{d}) \leq a_1 x^{-t_1} \cdot d \int_y^x t^{d-1} dt \\
& \leq a_1 x^{d-1-t_1} \cdot d |x-y| \leq a_1  \cdot 2^{-d+1+t_1}\cdot d |x-y|.
\end{split}\end{equation}
and for $x, y \in [\frac{1}{2},1]$ with $x \geq y$, that
\begin{equation}\begin{split}\label{eq3.26c}
0 \leq \psi_f(x)-\psi_f(y)  & \leq f(x) \Big(\Big(x-\frac{1}{2}\Big)^{d} - \Big(y-\frac{1}{2}\Big)^{d}\Big)\\
& \leq a_2 \Big(x-\frac{1}{2}\Big)^{-t_2} \cdot d \int_y^x \Big(t-\frac{1}{2}\Big)^{d-1} dt \\
& \leq a_2  \Big(x-\frac{1}{2}\Big)^{d-1-t_2} \cdot d |x-y| \leq a_2 \cdot 2^{-d+1+t_2} \cdot d|x-y|.
\end{split}\end{equation}
Also, from the definition of $\mathcal{C}_2$ in Lemma \ref{lemma3.4h} and the fact that $d > \max\{t_1,t_2\}$ we see that  $\phi_f(0) = \psi_f(\frac{1}{2}) = 0$ holds for each $f \in \mathcal{C}_2$. It follows that $\mathcal{A}_1$ and $\mathcal{A}_2$  are uniformly bounded and equicontinuous, so from the Arzel\`a-Ascoli Theorem we obtain that $\mathcal{A}_1$ and $\mathcal{A}_2$ are compact in $C([0,\frac{1}{2}])$ and $C([\frac{1}{2},1])$, respectively, w.r.t.~the supremum norm.

\medskip
Now let $\{f_n\}$ be a sequence in $\mathcal{C}_2$. It follows from the above that $\{f_n\}$ has a subsequence $\{f_{n_k}\}$ such that $\{\phi_{f_{n_k}}\}$ converges uniformly to some $\phi^* \in C([0,\frac{1}{2}])$ and $\{\psi_{f_{n_k}}\}$ converges uniformly to some $\psi^* \in C([\frac{1}{2},1])$ (for this we take a suitable subsequence of a subsequence of $\{f_n\}$). Now define the measurable function $f^*$ on $(0,1]$ by
\begin{align*}
f^*(x) =  \begin{cases}
x^{-d} \phi^*(x) & \text{if}\quad x \in (0,\frac{1}{2}],\\
(x-\frac{1}{2})^{-d} \psi^*(x) & \text{if}\quad x \in (\frac{1}{2},1].
\end{cases}
\end{align*}
Then $f^*$ is continuous on $(0,\frac{1}{2}]$ and $(\frac{1}{2},1]$. Moreover, $\{f_{n_k}\}$ converges pointwise to $f^*$. First of all, this gives $f^* \in \mathcal{C}_1$. Secondly, this gives combined with
\begin{align*}
\sup_{k \in \mathbb{N}} f_{n_k}(x) \leq a_1 x^{-t_1} \text{ for $x \in \Big(0,\frac{1}{2}\Big]$}, \qquad \sup_{k \in \mathbb{N}} f_{n_k}(x) \leq a_2 \Big(x-\frac{1}{2}\Big)^{-t_2} \text{ for $x \in \Big(\frac{1}{2},1\Big]$}
\end{align*}
and
\begin{align*}
\int_0^{\frac{1}{2}} x^{-t_1} dx < \infty, \qquad \int_{\frac{1}{2}}^1 \Big(x-\frac{1}{2}\Big)^{-t_2} dx < \infty,
\end{align*}
that $f^*(x) \leq a_1 x^{-t_1}$ for $x \in (0,\frac{1}{2}]$ and $f^*(x) \leq a_2 (x-\frac{1}{2})^{-t_2}$ for $x \in (\frac{1}{2},1]$, and that
\begin{align*}
\lim_{k \rightarrow \infty} \|f^* - f_{n_k}\|_1 = 0 \quad \text{ and so } \quad \int_0^1 f^* d\lambda = 1
\end{align*}
using the Dominated Convergence Theorem. We conclude that $f^* \in \mathcal{C}_2$ and that $f^*$ is a limit point of $\{f_n\}$ with respect to the $L^1(\lambda)$-norm.
\end{proof}

Using the previous lemma's we are now ready to prove Theorem \ref{thrm1.1b}.

\begin{proof}[Proof of Theorem \ref{thrm1.1b}]

(1) Take $f \in \mathcal{C}_2$ and define the sequence of functions $\{f_n\}$ by $f_n = \frac{1}{n} \sum_{i=0}^{n-1} P^i f$. Using that $P$ preserves $\mathcal{C}_2$ and that the average of a finite collection of elements of $\mathcal{C}_2$ is also an element of $\mathcal{C}_2$, we obtain that $\{f_n\}$ is a sequence in $\mathcal{C}_2$. It follows from Lemma \ref{lemma3.5u} that $\{f_n\}$ has a subsequence $\{f_{n_k}\}$ that converges w.r.t.~the $L^1(\lambda)$-norm to some $f^* \in \mathcal{C}_2$. As is standard, we then obtain that $Pf^*(x) = f^*(x)$ holds for $\lambda$-a.e.~$x \in [0,1]$ by noting that
\begin{align*}
\| Pf^* - f^*\|_1 & \leq \| Pf^* - P f_{n_k} \|_1 + \| P f_{n_k} - f_{n_k} \|_1 + \| f_{n_k} - f^*\|_1 \\
& \leq 2 \| f_{n_k} - f^* \|_1 + \Big \| \frac{1}{n_k} \sum_{i=0}^{n_k-1} P^{i+1} f - \frac{1}{n_k} \sum_{i=0}^{n_k-1} P^i f \Big \|_1 \\
& \leq 2 \| f_{n_k} - f^* \|_1 + \frac{1}{n_k} \| P^{n_k}f - f \|_1 \\
& \leq 2 \| f_{n_k} - f^* \|_1 + \frac{2}{n_k} \| f \|_1 \rightarrow 0, \qquad k \rightarrow \infty.
\end{align*}
Hence, $F$ admits an acs probability measure $\mu_{\mathbf p}$ with $\frac{d\mu_{\mathbf p}}{d\lambda} \in \mathcal{C}_2$. It follows that $\frac{d\mu_{\mathbf p}}{d\lambda}$ has full support on $[0,1]$, so we obtain that $\mu_{\mathbf p}$ is the only acs probability measure once we know that $F$ is ergodic with respect to $\mathbb{P} \times \mu_{\mathbf p}$. So let $A \subseteq \Sigma^{\mathbb{N}} \times [0,1]$ be Borel measurable such that $F^{-1} A = A$. Suppose $\mathbb{P} \times \mu_{\mathbf p}(A) > 0$. The probability measure $\rho$ on $\Sigma^{\mathbb{N}} \times [0,1]$ given by
\begin{align*}
\rho(B) = \frac{\mathbb{P} \times \mu_{\mathbf p}(A \cap B)}{\mathbb{P} \times \mu_{\mathbf p}(A)}
\end{align*}
for Borel measurable sets $B \subseteq \Sigma^{\mathbb{N}} \times [0,1]$ is $F$-invariant and absolutely continuous with respect to $\mathbb{P} \times \lambda$ with density $\frac{d\rho}{d\mathbb{P} \times \lambda}(\omega,x) = \frac{1}{\mathbb{P} \times \mu_{\mathbf p}(A)} 1_A(\omega,x) \frac{d\mu_{\mathbf p}}{d\lambda}(x)$. According to Lemma \ref{l:productmeasure} this yields an acs measure $\tilde{\mu}$ for $F$ such that $\rho = \mathbb{P} \times \tilde{\mu}$. Letting $L := \text{supp}(\tilde{\mu})$ denote the support of $\tilde{\mu}$, we obtain that $\text{supp}(\rho) = \Sigma^{\mathbb{N}} \times L$. By definition of $\rho$, it follows that $\Sigma^{\mathbb{N}} \times L \subseteq A$ and
\begin{align*}
A = \Sigma^{\mathbb{N}} \times L \quad \bmod \mathbb{P} \times \mu_{\mathbf p}.
\end{align*}
Since $\frac{d\mu_{\mathbf p}}{d\lambda}$ has full support on $[0,1]$, this yields
\begin{align}\label{eq18a}
A = \Sigma^{\mathbb{N}} \times L \quad \bmod \mathbb{P} \times \lambda.
\end{align}
Using the non-singularity of $F$ with respect to $\mathbb{P} \times \lambda$, we also obtain from this that
\begin{align}\label{eq18b}
F^{-1}A = F^{-1}(\Sigma^{\mathbb{N}} \times L) \quad \bmod \mathbb{P} \times \lambda.
\end{align}
Combining \eqref{eq18a} and \eqref{eq18b} with
\begin{align*}
\Sigma^{\mathbb{N}} \times L = \bigcup_{j \in \Sigma} [j] \times L \quad \text{and} \quad F^{-1}(\Sigma^{\mathbb{N}} \times L) = \bigcup_{j \in \Sigma} [j] \times T_j^{-1} L
\end{align*}
yields
\begin{align*}
L = T_j^{-1} L \quad \bmod \lambda
\end{align*}
for each $j \in \Sigma$. For all $i \in \Sigma$ with $\alpha_i < 1$, in particular for $i \in \Sigma$ with $\alpha_i = \alpha_{\min}$, we have that $T_i$ is ergodic with respect to $\lambda$, see e.g.~\cite[Theorem 5]{Y99}. In particular we have $\lambda(L) \in \{0,1\}$. Together with \eqref{eq18a} this shows that $\mathbb{P} \times \lambda(A) \in \{0,1\}$. Since $\mu_{\mathbf p} \ll \lambda$,  it follows from the assumption $\mathbb{P} \times \mu_{\mathbf p}(A) > 0$ that $\mathbb{P} \times \mu_{\mathbf p}(A) = 1$. We conclude that $F$ is ergodic with respect to $\mathbb{P} \times \mu_{\mathbf p}$.

\medskip
(2) Since $\frac{d\mu_{\mathbf p}}{d\lambda} \in \mathcal{C}_2$, it follows that $\frac{d\mu_{\mathbf p}}{d\lambda}$ is bounded away from zero, is decreasing on the intervals $(0,\frac{1}{2}]$ and $(\frac{1}{2},1]$, and satisfies \eqref{eq1.5b} and \eqref{eq1.6b} with $a_1,a_2 > 0$ as in Lemma \ref{lemma3.4h}. Furthermore, applying the last three inequalities in \eqref{eq3.25c} with $f = \frac{d\mu_{\mathbf p}}{d\lambda}$ yields, for $x,y \in (0,\frac{1}{2}]$ with $x \geq y$,
\begin{align*}
0 \leq \frac{d\mu_{\mathbf p}}{d\lambda}(y) - \frac{d\mu_{\mathbf p}}{d\lambda}(x) & = y^{-d} \Big(y^{d}\frac{d\mu_{\mathbf p}}{d\lambda}(y) - y^{d} \frac{d\mu_{\mathbf p}}{d\lambda}(x)\Big) \\
& \leq y^{-d} \cdot \frac{d\mu_{\mathbf p}}{d\lambda}(x) (x^{d} - y^{d}) \\
& \leq y^{-d} \cdot a_1  \cdot 2^{-d+1+t_1}\cdot d |x-y|
\end{align*}
and likewise applying the last three inequalities in \eqref{eq3.26c} with $f = \frac{d\mu_{\mathbf p}}{d\lambda}$ yields for $x,y \in (\frac{1}{2},1]$ with $x \geq y$,
\begin{align*}
0 \leq \frac{d\mu_{\mathbf p}}{d\lambda}(y) - \frac{d\mu_{\mathbf p}}{d\lambda}(x) & = \Big(y-\frac{1}{2}\Big)^{-d}\Big(\Big(y-\frac{1}{2}\Big)^{d} \frac{d\mu_{\mathbf p}}{d\lambda}(y) - \Big(y-\frac{1}{2}\Big)^{d} \frac{d\mu_{\mathbf p}}{d\lambda}(x)\Big) \\
& \leq \Big(y-\frac{1}{2}\Big)^{-d} \cdot \frac{d\mu_{\mathbf p}}{d\lambda}(x) \Big(\Big(x-\frac{1}{2}\Big)^{d} - \Big(y-\frac{1}{2}\Big)^{d}\Big) \\
& \leq \Big(y-\frac{1}{2}\Big)^{-d} \cdot a_2 \cdot 2^{-d+1+t_2} \cdot d|x-y|
\end{align*}
Hence, $\frac{d\mu_{\mathbf p}}{d\lambda}$ is locally Lipschitz on the intervals $(0,\frac{1}{2}]$ and $(\frac{1}{2},1]$.
\end{proof}

We conclude this section with the proof of Corollary \ref{cor1.1}.

\begin{proof}[Proof of Corollary \ref{cor1.1}]
For each $n \in \mathbb{N}$, let $\mathbf p_n = (p_{n,j})_{j \in \Sigma}$ be a positive probability vector such that $\sup_n \sum_{r \in \Sigma_R} p_{n,r} K_r^{-\alpha_{\min}} < 1$ and assume that $\lim_{n \rightarrow \infty} \mathbf p_n = \mathbf p$ in $\mathbb{R}_+^N$. In order to conclude that $\frac{d\mu_{\mathbf p_n}}{d\lambda}$ converges in $L^1(\lambda)$ to $\frac{d\mu_{\mathbf p}}{d\lambda}$ we will show that each subsequence of $\{\frac{d\mu_{\mathbf p_n}}{d\lambda}\}$ has a further subsequence that converges in $L^1(\lambda)$ to $\frac{d\mu_{\mathbf p}}{d\lambda}$.

Let $\{\mathbf q_k\}$ be a subsequence of $\{\mathbf p_n\}$, and for convenience write $f_k = \frac{d\mu_{\mathbf q_k}}{d\lambda}$ for each $k \in \mathbb{N}$. First of all, observe that from $\sup_n \sum_{r \in \Sigma_R} p_{n,r} K_r^{-\alpha_{\min}} < 1$ and $\lim_{n \rightarrow \infty} \mathbf p_n = \mathbf p$ it follows from the proof of Lemma \ref{lemma3.4h} that there exist sufficiently large $a_1,a_2 > 0$ and $\beta \in (\alpha_{\min},\gamma)$ sufficiently close to $\alpha_{\min}$ such that $\mathcal{C}_2 = \mathcal{C}_2(a_1,a_2,\beta)$ from Lemma \ref{lemma3.4h} contains the sequence $\{f_k\}$. Hence, it follows from Lemma \ref{lemma3.5u} that $\{f_k\}$ has a subsequence $\{f_{k_m}\}$ that converges  with respect to the $L^1(\lambda)$-norm to some $\tilde{f} \in \mathcal{C}_2$. We have
\begin{align*}
\|P_{F,\mathbf p} \tilde{f} - \tilde{f}\|_1 &\leq \|P_{F,\mathbf p} \tilde{f} - P_{F,\mathbf q_{k_m}} \tilde{f}\|_1 + \| P_{F,\mathbf q_{k_m}} \tilde{f} - f_{k_m} \|_1 + \| f_{k_m} - \tilde{f}\|_1 \\
&\leq \sum_{j \in \Sigma} |p_j - q_{k_m,j}| \cdot \|P_{T_j}  \tilde{f}\|_1 + \| P_{F,\mathbf q_{k_m}} \tilde{f} - P_{F,\mathbf q_{k_m}} f_{k_m}\|_1 + \| f_{k_m} - \tilde{f}\|_1\\
&\leq  \sum_{j \in \Sigma} |p_j - q_{k_m,j}| \cdot \|  \tilde{f}\|_1 + 2\| f_{k_m} - \tilde{f}\|_1.
\end{align*}
Since we have $\lim_{m \rightarrow \infty} \mathbf q_{k_m} = \mathbf p$ in $\mathbb{R}_+^N$ and $\lim_{m \rightarrow \infty} \|f_{k_m} - \tilde{f}\|_1 = 0$ we obtain that $P_{F,\mathbf p} \tilde{f}(x) = \tilde{f}(x)$ holds for $\lambda$-a.e.~$x \in [0,1]$. It follows from Theorem 1.1 that $F$ admits only one acs probability measure associated to $\mathbf p$, so we conclude that $\tilde{f} = \frac{d\mu_{\mathbf p}}{d\lambda}$ holds $\lambda$-a.e. Hence, $\{f_{k_m}\}$ converges in $L^1(\lambda)$ to $\frac{d\mu_{\mathbf p}}{d\lambda}$.
\end{proof}

\section{Final remarks}\label{sec4}

In addition to the results of Theorems \ref{thrm1.1a} and \ref{thrm1.1b}, we conjecture that also if $\eta = 1$ then $F$ admits no acs probability measure. A possible approach to prove this might be to again use Kac's Lemma. However, a finer bound for the behaviour near $\frac{1}{2}$ than the one given in \eqref{eq3.7z} should be needed for the proof.

\medskip
The proof of Theorem \ref{thrm1.1b} immediately carries over to the case that $\Sigma_R = \emptyset$ by taking $\beta = 1$, thus recovering the result from \cite{zeegers18} that a random system generated by i.i.d.~random compositions of finitely many LSV maps admits a unique absolutely continuous invariant probability measure if $\alpha_{\min} < 1$ with density as in \eqref{eq1.7b} for some $a > 0$. To show that in case $\Sigma_R = \emptyset$ this density is decreasing and continuous on the whole interval $(0,1]$ similar arguments as in Section \ref{subsec3.2} can be used with the sets $\mathcal{C}_0$, $\mathcal{C}_1$ and $\mathcal{C}_2$ replaced by
\begin{align*}
&\mathcal{K}_0 = \Big\{f \in L^1(\lambda): f \geq 0, \text{$f$ decreasing and continuous on $(0,1]$}\Big\},\\
&\mathcal{K}_1 = \Big\{f \in \mathcal{K}_0: x \mapsto x^{\alpha_{\max}+1} f(x) \text{ increasing on $(0,1]$}\Big\},\\
&\mathcal{K}_2 = \Big\{ f \in \mathcal{K}_1: f(x) \leq a x^{-\alpha_{\min}} \text{ on $(0,1]$}, \int_0^1 f d\lambda = 1 \Big\} \quad \text{ with $a > 0$ large enough.}
\end{align*}
This has been done in \cite{zeegers18}.

\medskip
It would be interesting to study further statistical properties of the random systems from Theorem \ref{thrm1.1b}. It is proven in \cite{hu04,LSV,Y99,gouezel04} that for $\alpha \in (0,1)$ correlations under $S_{\alpha}$ decay polynomially fast with rate $n^{1-1/\alpha}$. Moreover, for the random system of LSV maps $S_{\alpha}$ considered in \cite{BBD14} where $\alpha$ is sampled i.i.d.~from some fixed finite subset $A \subseteq (0,1]$ the authors obtained that annealed correlations decay as fast as $n^{1-1/\alpha_{\min}}$, a rate that in \cite{BB16} is shown to be sharp for a class of observables that vanish near zero. Here the minimal value $\alpha_{\min}$ of $A$ is assumed to lie in $(0,1)$. This result was extended in \cite{BQT21} to the case that $A \subseteq (0,\infty)$ is not necessarily a finite subset of $(0,1)$ and there is a positive probability to choose a parameter $< 1$. The results from \cite{BBD14,BB16,zeegers18,BQT21} demonstrate that 
the annealed dynamics of such random systems of LSV maps are governed by the map with the fastest relaxation rate. This behaviour is significantly different from the behaviour of the random systems from Theorem \ref{thrm1.1b} where the annealed dynamics is determined by the interplay between the neutral exponentially fast attraction to $\frac{1}{2}$ and polynomially fast repulsion from zero. We conjecture that the random systems from Theorem \ref{thrm1.1b} are mixing and that in case $\Sigma_R = \{1\}$ the rate of the annealed decay of correlations is equal to $1- \frac{1}{\alpha_{\min}} \cdot \min\{\frac{\log p_1}{\log K_1},1\}$.

\medskip
Finally, a natural question is whether the results of Theorems \ref{thrm1.1a} and \ref{thrm1.1b} can be extended to a more general class of one-dimensional random dynamical systems that exhibit this interplay between two fixed points, one to which orbits converge exponentially fast and one from which orbits diverge polynomially fast. First of all, if being $C^1$ and having $\frac{1}{2}$ as attracting fixed point are the only conditions we put on the right branches of the maps in $\mathfrak{R}$, then it can be shown in a similar way as in the proof of Theorem \ref{thrm1.1a} that $F$ admits no acs probability measure if
\begin{align*}
\sum_{r \in \Sigma_R} p_r \cdot \Big(\lim_{x \downarrow \frac{1}{2}} |DT_r(x)|\Big)^{-\alpha_{\min}} > 1
\end{align*}
by applying Kac's Lemma on a small enough domain in $\Sigma^{\mathbb{N}} \times (0,\frac{1}{2})$. 
Secondly, we used in the proofs of Lemma \ref{lemma3.3} and Lemma \ref{lemma3.4h} that $\frac{1}{DR_{\alpha_r,K_r}}\big(\frac{x-\frac{1}{2}}{z_r-\frac{1}{2}}\big)^{d}$ is increasing and $\frac{1}{DR_{\alpha_r,K_r}}\big(\frac{x-\frac{1}{2}}{z_r-\frac{1}{2}}\big)^{t_2}$ is decreasing, respectively, by means of the results on $H_{K,b}$ in Lemma \ref{lemma3.2b}. However, for other maps that have the property that $\frac{1}{2}$ and $1$ are fixed points and that orbits are attracted to $\frac{1}{2}$ exponentially fast this is not true in general. Still a phase transition is to be expected, but different techniques are needed to prove this. This is also the case when we drop the condition that $1$ is a fixed point of the maps in $\mathfrak{R}$, for instance by taking $R_{\alpha,K}(x) = \frac{1}{2}+K(x-\frac{1}{2})$ if $x \in (\frac{1}{2},1]$, in which case Lemma \ref{lemma3.3} would not hold. Thirdly, the results of Theorems \ref{thrm1.1a} and \ref{thrm1.1b} might carry over if we allow the left branches to only satisfy the conditions on the left branch of the maps $\{T_{\alpha}: [0,1] \rightarrow [0,1]\}_{\alpha \in (0,1)}$ considered in \cite{murray05} or Section 5 of \cite{LSV}. Each map $T_{\alpha}$ then satisfies $T_{\alpha}(0) = 0$ and $DT_{\alpha}(x) = 1 + C x^{\alpha} + o(x^{ \alpha})$ for $x$ close to zero and where $C > 0$ is some constant.

\bibliographystyle{plain} 
\bibliography{Bibliography}

\end{document}